\documentclass{amsart}
\usepackage[all]{xy}
\usepackage{amsfonts, amsmath, amsthm, amssymb, tikz, accents, bm}
\usepackage[capitalise]{cleveref}
\title[Stability for coloured configurations and symmetric complements]{Homological stability for coloured configuration spaces and symmetric complements}
\author{TriThang Tran}
\email{trt@ms.unimelb.edu.au}
\date{\today}

\makeatletter
\newtheorem*{rep@theorem}{\rep@title}
\newcommand{\newreptheorem}[2]{%
\newenvironment{rep#1}[1]{%
 \def\rep@title{#2 \ref{##1}}%
 \begin{rep@theorem}}%
 {\end{rep@theorem}}}
\makeatother

\newreptheorem{theorem}{Theorem}
\newreptheorem{lemma}{Lemma}

\newtheorem{theorem}{Theorem}[section]
\newtheorem{lemma}[theorem]{Lemma}
\newtheorem{corollary}[theorem]{Corollary}
\newtheorem{proposition}[theorem]{Proposition}
\newtheorem{definition}[theorem]{Definition}
\newtheorem{example}[theorem]{Example}
\newtheorem{conjecture}[theorem]{Conjecture}

\theoremstyle{remark}
\newtheorem{remark}[theorem]{Remark}

\newcommand{\del}{\partial}
\newcommand{\mc}{\mathcal}

\newcommand{\norm}[1]{\lVert#1\rVert}
\newcommand{\st}{\; \vert \;}

\newcommand{\conf}{\mathrm{Conf}}
\newcommand{\pconf}{\mathrm{PConf}}

\newcommand{\sym}{\Sigma}
\newcommand{\symp}{\mathrm{Sym}}

\newcommand{\stab}{\mathrm{stab}}
\newcommand{\lf}{\left\lfloor}
\newcommand{\rf}{\right\rfloor}

\linespread{1.0}
\begin{document}
\maketitle
\begin{abstract} We prove a homological stability theorem for certain complements of symmetric spaces. This is a variant of a conjecture by Vakil and Matchett Wood \cite[Conjecture F]{vw12} for subspaces of $\symp^nX$ where $X$ is an open manifold admitting a boundary. To do this we prove a homological stability result for a type of ``coloured" configuration space by adding points of the same colour.
\end{abstract}

\section{Introduction}
For a manifold $X$, the \emph{ordered configuration space of $X$} is
	\[ \pconf_n(X) := \{ (x_1, \ldots, x_n) \in X^n \st x_i \neq x_j \mbox{ for } i \neq j \}. \]
The symmetric group $\sym_n$ acts on this space by permuting $n$-tuples. The \emph{unordered configuration space} is the quotient
 	\[ \conf_n(X) := \frac{\pconf_n(X)}{\sym_n}. \]
By the work of McDuff and Segal in the 70's \cite{mcduff75, segal79, segal73} and more recently by Randal-Williams and Church \cite{orw13, church12}, we know that the spaces $\conf_n(X)$ satisfy a property known as \emph{homological stability} when $X$ is the interior of a manifold $\bar{X}$ with boundary. In particular, there are maps
	\[ H_*(\conf_n(X), \mathbb{Z}) \rightarrow H_*(\conf_{n+1}(X), \mathbb{Z}) \]
that are isomorphisms for $* \leq \frac{n}{2}$. For a statement of the theorem in its most general form, see \cite{orw13}. In this paper, we study a generalisation of these spaces called the \emph{coloured configuration space}.

For a vector $\vec{v} \in (\mathbb{Z}_{\geq 0})^k$, let $\vec{v}(i)$ denote the $i^{th}$ coordinate of $\vec{v}$. The coloured configuration space of $X$ is the quotient
	\[ \conf_{\vec{v}}(X) := \frac{ \pconf_{\norm{\vec{v}}}(X) }{\sym_{\vec{v}(1)} \times \cdots \times \sym_{\vec{v}(n)}}, \]
where $\norm{\vec{v}} := \sum_i \vec{v}(i)$. One thinks of $\conf_{\vec{v}}(X)$ as configurations of $X$ where the points have been coloured. Points of a different colour are distinguishable but points of the same colour are not. The ordered configuration space occurs  as a special case of a coloured configuration space where there is only one point of each colour whereas the unordered configuration space occurs when there is only one colour. 

\begin{remark} In the literature (e.g., \cite{arnold69}), the ordered configuration space is sometimes referred to as the coloured (or dyed) configuration space. Our coloured configuration spaces differ from this in the sense that it will be possible to have more than one point of the same colour.
\end{remark}

If $X$ is the interior of a manifold $\bar{X}$ with boundary, then one can define a `stabilisation map' 
	\[ \stab_i : \conf_{\vec{v}}(X) \rightarrow \conf_{\vec{v} + e_i}(X) \]
which adds a point near the boundary of $X$ of the $i^{th}$ colour. Our first theorem is the following homological stability theorem for increasing the number of points of a single colour, which we prove in \cref{sec: hstab coloured}.

\begin{reptheorem}{hstab coloured}  Let $X$ be a connected open manifold of dimension greater than one that is the interior of a compact manifold with boundary. For each $i$, the map 
	\[ (\stab_i)_*: H_*(\conf_{\vec{v}}(X), \mathbb{Z}) \rightarrow H_*(\conf_{\vec{v} + e_i}(X), \mathbb{Z}) \]
is an isomorphism for $* \leq \frac{\vec{v}(i)}{2}$.
\end{reptheorem}

\begin{remark} In \cite{church12}, Church proves a version of \cref{hstab coloured} rationally. While we have not seen the integral version of \cref{hstab coloured} in the literature, it is easily derivable from the usual homological stability proofs for configuration spaces. Indeed it should be possible to repeat the proof in \cite{orw13} for unordered configuration spaces, allowing for the points not of the colour we are stabilising by to have a ``free ride" along the proof.
\end{remark}

The purpose of this paper is to use \cref{hstab coloured} to answer a conjecture proposed by Vakil and Matchett Wood in \cite[Conjecture F]{vw12}.

Let $\lambda$ be a partition of $n$ and let $X$ be a manifold. Let $\symp^n X$ denote the $n^{th}$ symmetric product of $X$. Define $w_\lambda(X)$ to be the subspace of $\symp^n X$ with multiplicities prescribed by $\lambda$. For example if $\lambda = 1 + 4 + 4$ then $w_\lambda(X)$ is the subspace of $\symp^9(X)$ consisting of elements that have one point of multiplicity $1$ and two points of multiplicity $4$. Let $\bar{w}_\lambda(X)$ be the closure of $w_\lambda(X)$ in $\symp^n X$. We will be interested in studying the complement $\bar{w}_\lambda^c(X)$ of $\bar{w}_\lambda(X)$ in $\symp^nX$.

One can think of the space $\bar{w}^c_\lambda(X)$ as consisting of elements of $\symp^nX$ such that the partition of $n$ obtained from the multiplicity of the points is not as coarse as $\lambda$. The main theorem of \cref{sec: symcomp} and this paper is the following.

\begin{reptheorem}{hstab symcomp}   Let $X$ be a connected open oriented manifold of dimension greater than one that is the interior of a compact manifold with boundary. Let $\lambda$ be a partition of $n$. There are maps
	\[ \stab_*: H_*(\bar{w}^c_{1^j\lambda}(X); \mathbb{Q}) \rightarrow H_*(\bar{w}^c_{1^{j+1}\lambda}(X), \mathbb{Q}) \] 
which are isomorphisms for $* \leq \frac{j+n}{4} - \frac{1}{2}$. 
\end{reptheorem}

\subsection{Motivation}
In a recent paper \cite{vw12}, Vakil and Matchett Wood were motivated by their calculations with motivic zeta functions to arrive at a number of conjectures. They call one such conjecture ``motivic stabilisation of symmetric powers" which says the following: for $X$ a geometrically irreducible variety of dimension $d$, the limit $\lim_{n \rightarrow \infty} [\symp^n X]/[\mathbb{A}^{dn}]$ exists in $\widehat{\mc{M}}_{\mathbb{A}^1}$, where $\widehat{\mc{M}}_{\mathbb{A}^1}$ is the Grothendieck ring of varieties localised at $[\mathbb{A}^1]$ and completed with respect to the dimension filtration on $\widehat{\mc{M}}_{[\mathbb{A}^1]}$. We leave the description of this to their paper. 

The spaces $\bar{w}_{\lambda}$ are subspaces of $\symp^nX$. For $\Phi$ a certain motivic measure, Vakil and Matchett Wood were able to make sense of the limit 
	\[ \lim_{j \rightarrow \infty}\frac{[\bar{w}_{1^j\lambda}]}{[\symp^{j + n}(X)]} \]
 in $\Phi(\hat{\mc{M}}_{[\mathbb{A}^1]})$ as a motivic probability. When put into a topological setting, this lead to the following conjecture.

\begin{conjecture}[{\cite[Conjecture F]{vw12}}] Given $i$ and a partition $\lambda$, for an irreducible smooth complex variety $X$, the limit $\lim_{j \rightarrow \infty} \dim{H_i(\bar{w}^c_{1^j\lambda}(X), \mathbb{Q})}$ exists.
\end{conjecture}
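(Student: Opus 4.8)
The plan is to stratify $\bar{w}^c_{1^j\lambda}(X)$ so that its strata are coloured configuration spaces, to form the spectral sequence of this stratification, and to compare it for consecutive values of $j$ by feeding in \cref{hstab coloured}.

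First I would set up the stratification. Recording the partition $\mu\vdash n+j$ of multiplicities of a point of $\symp^{n+j}X$, and then grouping the underlying distinct points of $X$ according to their multiplicity, identifies the stratum $S_\mu$ with the coloured configuration space $\conf_{\vec{a}(\mu)}(X)$, where $\vec{a}(\mu)(k)$ is the number of parts of $\mu$ equal to $k$; in particular $\dim S_\mu=d\cdot\ell(\mu)$, with $d=\dim X$ and $\ell(\mu)$ the number of parts of $\mu$. Since multiplicities add as points collide, $\bar{w}_{1^j\lambda}(X)$ is precisely the union of the $S_\mu$ for which $\mu$ is a coarsening of $1^j\lambda$, so $\bar{w}^c_{1^j\lambda}(X)$ is the open union of the remaining strata and the frontier of a stratum is the union of strictly coarser strata. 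The stabilisation map is the restriction of the ``insert a point of multiplicity one near the boundary'' map $\symp^{n+j}X\to\symp^{n+j+1}X$; it carries $S_\mu$ to $S_{\mu\cup\{1\}}$ and, under the identifications above, restricts on $S_\mu$ to the colour-one stabilisation map $\stab_1\colon\conf_{\vec{a}(\mu)}(X)\to\conf_{\vec{a}(\mu)+e_1}(X)$ of \cref{hstab coloured}.

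Next I would filter $\bar{w}^c_{1^j\lambda}(X)$ by the closed subspaces $\bigcup_{\dim S_\mu\le p}S_\mu$, which are closed precisely because the frontier strata have strictly smaller dimension. The associated homology spectral sequence converges to $H_*(\bar{w}^c_{1^j\lambda}(X);\mathbb{Q})$, and since the frontier of each stratum is a union of coarser strata its filtration quotients are wedges of one-point compactifications of strata; so the $E^1$-page is $\bigoplus_\mu H^{BM}_*(\conf_{\vec{a}(\mu)}(X);\mathbb{Q})$. The stabilisation map raises the dimension of each stratum by $d$, hence induces a (filtration-shifting) map of these spectral sequences which on the $E^1$-page is $\stab_1$ on the Borel--Moore homology of each coloured configuration stratum. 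To bring \cref{hstab coloured} to bear, I would deduce from it the Borel--Moore analogue: comparing $H_*$ with $H^{BM}_*$ by the five lemma (their difference being controlled by the homology of the ends of $\conf_{\vec{v}}(X)$, which stabilises by the same argument applied to configurations with a point near $\partial\bar{X}$ or near a collision), one sees that $\stab_1$ is an isomorphism on $H^{BM}_k(\conf_{\vec{v}}(X);\mathbb{Q})$ for $k\le\tfrac{1}{2}\vec{v}(1)-c$, for an absolute constant $c$.

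The crux, and the source of the constant $\tfrac{1}{4}$, is the bookkeeping passing from these stratumwise ranges to the uniform range $\tfrac{j+n}{4}-\tfrac{1}{2}$. Two facts organise this: on $S_\mu$ one has $\vec{a}(\mu)(1)\ge 2\ell(\mu)-(n+j)$ (a count of parts), and, by Poincaré duality together with the bound that the cohomological dimension of $\conf_m(X)$ is at most $m(d-1)$, one has $H^{BM}_k(\conf_{\vec{a}(\mu)}(X))=0$ for $k<\ell(\mu)$; together these pin down, in each total degree, which strata can contribute and through what range they stabilise. The real difficulty is the ``small'' strata — the coarsenings of $1^j\lambda$ with few parts equal to one — on which \cref{hstab coloured} gives no usable stability; one must show that through the stated range the $E^1$-contributions of these strata either vanish or are cancelled, against the generic strata, by the differentials of the spectral sequence. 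I expect this cancellation analysis to be the main obstacle, and it is what brings the range down from the naive value $\tfrac{j+n}{2}$, coming from the top stratum $\conf_{n+j}(X)$, to $\tfrac{j+n}{4}$. Granting it, the comparison theorem for spectral sequences gives the asserted isomorphism, and since each $\bar{w}^c_{1^j\lambda}(X)$ has the homotopy type of a finite complex, Conjecture~F follows at once.
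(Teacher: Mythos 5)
You have the right global plan (stratify $\bar{w}^c_{1^j\lambda}$ by multiplicity type, recognise the strata as coloured configuration spaces, feed Theorem \ref{hstab coloured} into the spectral sequence of the stratification, and convert by duality), and your identification of strata, your count of codimension, and your observation that $\stab$ restricts stratum-wise to $\stab_1$ all match the paper's. But there are two genuine problems, one technical and one structural, and the structural one shows that the heart of your argument is pointing in the wrong direction.

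The technical problem is the identification of the $E^1$-page. You filter by dimension and assert that the filtration quotients $G_p/G_{p-1}$ are wedges of one-point compactifications of the open strata, so that the homology spectral sequence has $E^1 = H^{BM}_*(\text{strata})$ and converges to $H_*(\bar{w}^c)$. Since $X$ is open, $\bar{w}^c_{1^j\lambda}(X)$ and the closures of its strata are not compact, so $G_p/G_{p-1}$ is \emph{not} the one-point compactification of the dimension-$p$ stratum: collapsing $G_{p-1}$ kills the frontier strata but leaves the ends of $X$ untouched. A spectral sequence with $E^1 = H^{BM}_*(\text{strata})$ does exist (from the Borel--Moore long exact sequence of a closed inclusion), but it converges to $H^{BM}_*(\bar{w}^c)$, not to $H_*(\bar{w}^c)$. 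The paper sidesteps this cleanly: it forms the long exact sequence in \emph{compactly supported cohomology} for each closed inclusion $F_{p+1}\subset F_p$, getting a spectral sequence $E^1_{pq}=H^{\,\dim W - p - q}_c(M[p])\Rightarrow H^{\,\dim W-p-q}_c(\bar w^c_{1^j\lambda})$, then applies Poincar\'e duality on each \emph{stratum} (a genuine open manifold, so integral) to rewrite $E^1_{pq}$ as the ordinary homology $H_q(M[p])$. Only at the very end does it apply Poincar\'e duality to $\bar{w}^c_{1^j\lambda}$ itself, and this is the sole place rationality is needed, because $\bar{w}^c_{1^j\lambda}$ is only an orbifold. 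In particular no Borel--Moore variant of Theorem~\ref{hstab coloured} is needed, and your proposed derivation of such a variant ``for $k\le\frac12\vec v(1)-c$, for an absolute constant $c$'' does not check out: under Poincar\'e duality $H^{BM}_k(\conf_{\vec v}(X))\cong H^{\dim -k}(\conf_{\vec v}(X))$, so a homological stability range $q\le \vec v(1)/2$ translates into a Borel--Moore range that grows with the dimension of the configuration space, not a low-degree range with a fixed constant offset.

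The structural problem is your anticipated ``cancellation analysis'' and your explanation of where the $1/4$ comes from. There is no cancellation of differentials against the small strata in the paper's argument, and there is no need to invoke the vanishing of low-degree Borel--Moore homology. The key point (Proposition~\ref{strata bound}) is purely combinatorial: every codimension-$p$ stratum of $\bar{w}^c_{1^j\lambda}$ has at least $j+n-\frac{2p}{\dim X}$ parts equal to $1$, so Theorem~\ref{hstab coloured} already gives an isomorphism on $E^1_{pq}$ whenever $q+\frac{p}{\dim X}\le\frac{j+n}{2}$, and in particular whenever $p+q\le\frac{j+n}{2}$. The codimension $p$ appears both as the filtration index and in the stability range of the stratum, and these combine in your favour, not against you. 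The reduction from $\frac{j+n}{2}$ to $\frac{j+n}{4}-\frac12$ comes entirely from Lemma~\ref{ssbound}, a general first-quadrant comparison lemma saying that an isomorphism of $E^1$-pages in total degree $\le 2N+1$ yields an isomorphism of $E^\infty$-pages only in total degree $\le N$ (because one loses one degree per page, and one needs roughly $N$ pages). So the factor of $4$ has nothing to do with the small strata; it is the cost of running the spectral sequence to $E^\infty$.
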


\cref{hstab symcomp} gives a positive answer to the analog of this conjecture for $X$ an open manifold of dimension greater than one admitting a boundary and any partition $\lambda$. The reason for studying the $\bar{w}^c_{\lambda}(X)$ as opposed to $\bar{w}_\lambda(X)$ is that in some sense the $\bar{w}^c_{\lambda}(X)$ behave nicer. For example, they are open subpaces of the orbifold $\symp^n(X)$, which means they satisfy rational Poincar\'{e} duality. In fact, in \cref{hstab symcomp}, we prove integral stability for the compactly supported cohomology of the spaces $\bar{w}_{1^j\lambda}^c(X)$ and only need to work rationally when we use Poincar\'{e} duality to convert back into homology.

\begin{remark} We would like to point out that as this paper was being prepared, a proof of \cite[Conjecture F]{vw12} in certain cases has appeared in a paper by Kupers and Miller \cite{km13}. In their paper they use topological chiral homology and prove the conjecture for open manifolds admitting a boundary for partitions where $\lambda = (c+1)^{m+1},$ for constants $c$ and $m$. They also prove a version for closed manifolds, which in this case was for partitions of the form $\lambda = c+1$. 

We have also been recently made aware of the fact that Kupers and Miller also have a preprint \cite{km13b} where they prove \cite[Conjecture F]{vw12} for $X$ being any connected manifold of dimension greater than 1.
\end{remark}

\subsection{Methods of proof}  In order to prove \cref{hstab coloured}, we study the fibre bundle
	\[ \conf_{\vec{v}}(X) \rightarrow \conf_{\vec{v}(m)}(X), \]
where $m \neq i$ is a different colour to the colour we are stabilising by. The fibre of this fibre bundle consists of coloured configurations with one fewer colour. The fibre will then satisfy homological stability by induction so it is possible to use the Serre spectral sequence to derive homological stability for the total space. 

In \cref{sec: symcomp}, \cref{hstab coloured} is used to study the spaces $\bar{w}^c_\lambda(X)$. We do this by showing that the spaces $\bar{w}^c_\lambda(X)$ are stratified spaces where the strata are coloured configuration spaces. We prove \cref{hstab symcomp} by studying the spectral sequence associated to the filtration of $\bar{w}^c_\lambda(X)$ by its strata. The idea is to carry a homological stability statement about each strata through the spectral sequence to a homological stability statement about the integral compactly supported cohomology of $\bar{w}^c_\lambda(X)$. It is in the translation back to homology that we must work rationally since the spaces $\bar{w}^c_\lambda(X)$ are only rational manifolds so they only satisfy rational Poincar\'{e} duality.


\subsection*{Acknowledgements}  
I would like to thank my advisor Craig Westerland for pointing me towards the conjectures of Vakil and Matchett Wood. I further thank him and Nathalie Wahl for many useful discussions relating to this work. I would also like to thank Alexander Voronov for providing me with some useful references and Jeremy Miller and Sander Kupers for sharing their preprint with me.

\section{Homology of coloured configurations} \label{sec: hstab coloured}

The \emph{$n^{th}$ ordered configuration space} of a manifold $X$ is the space of $n$ distinct points in $X$. In symbols it is
	\[ \pconf_n(X) := \{ (x_1, \ldots, x_n) \in X^n \st x_i \neq x_j \mbox{ for } i \neq j\}. \]
The symmetric group $\sym_n$ acts on $\pconf_n(X)$ by permuting $n$-tuples. The \emph{$n^{th}$ unordered configuration space} of $X$ is the quotient
	\[ \conf_n(X) := \frac{\pconf_n(X)}{\sym_n}. \]

In this section we will prove homological stability for certain types of coloured configuration spaces.

\subsection{The definition}
Let $\vec{v} \in (\mathbb{Z}_{\geq 0})^k$ for some $k > 0$. We will write $\vec{v}(i)$ for the $i^{th}$ coordinate of $\vec{v}$. Define $\norm{\vec{v}} := \sum_i \vec{v}(i) \in \mathbb{Z}_{\geq 0}$. 

\begin{definition} Let $X$ be a manifold. Let $\vec{v} \in (\mathbb{Z}_{\geq 0})^k$. The \emph{coloured configuration space of $X$} is the quotient of $\pconf_{\norm{\vec{v}}}(X)$ by the product $\sym_{\vec{v}(1)} \times \ldots \times \sym_{\vec{v}(n)}$ of symmetric groups. That is
	\[ \conf_{\vec{v}}(X) := \frac{ \pconf_{\norm{\vec{v}}}(X) }{\sym_{\vec{v}(1)} \times \cdots \times \sym_{\vec{v}(n)}}. \]
\end{definition}

One way to think of these configurations spaces is to think of the usual unordered configuration space with points grouped together by colours. The number of points of each colour is prescribed by the vector $\vec{v}$. For example, if $\vec{v} = (2, 3)$ then there are $2$ points of colour one, and $3$ points of colour two. More generally, there are $\vec{v}(i)$ points of colour $i$. Quotienting out by the product of symmetric groups means that we can distinguish points of different colours but cannot distinguish points of the same colour. These spaces lie somewhere in between ordered and unordered configuration spaces. 

\begin{remark} It is possible that some entries in $\vec{v}$ are $0$. Having extra $0$'s does not change the space. In \cref{sec: symcomp}, being able to have $0$ of a certain colour will be conceptually useful when we think of colours as multiplicities of points.
\end{remark}

\subsection{Stabilisation maps} \label{sec: stab maps}
Let $e_i$ be the vector with $1$ in the $i^{th}$ entry and $0$'s in the other entries. In the same way as one is able to define a stabilisation map $\stab : \conf_n(X) \rightarrow \conf_{n+1}(X)$ when $X$ is the interior of a manifold $\bar{X}$ with boundary (see e.g. \cite{orw13, palmer11}), we can, for each $i$, define stabilisation maps
	\[ \stab_i : \conf_{\vec{v}}(X) \rightarrow \conf_{\vec{v} + e_i}(X) \]
that add a point of colour $i$. 

Let $X$ be the interior of a manifold $\bar{X}$ with boundary $\del \bar{X}$. Pick a point $b_0 \in \del \bar{X}$ and let $\del_0$ be the component of $\del \bar{X}$ such that $b_0 \in \del_0$.
On the level of ordered configuration spaces, we define a map 
	\[ s :\pconf_{\norm{\vec{v}}}(X) \rightarrow \pconf_{\norm{\vec{v}} + 1}(X'), \]
where $X'$ is obtained from $\bar{X}$ by adding a collar neighbourhood around $\del_0$. The map $s$ is then given by inserting $b_0$ in front of the $ (\vec{v}(1) + \cdots + \vec{v}(i))$th coordinate. Picking an isomorphism $f: X \cong X'$, with support in a small neighbourhood of $\del_0$, we get a map 
	\[ f \circ s :\pconf_{\norm{\vec{v}}}(X) \rightarrow \pconf_{\norm{\vec{v} + e_i}}(X). \]
The map $\stab_i$ is then obtained as the quotient
	\[ \stab_i : \frac{\pconf_{\norm{\vec{v}}}(X)}{\sym_{\vec{v}(1)} \times \cdots \times \sym_{\vec{v}(i)} \times \cdots \times \sym_{\vec{v}(n)}}  \rightarrow \frac{\pconf_{\norm{\vec{v} + e_i}}(X)}{\sym_{\vec{v}(1)} \times \cdots \times \sym_{\vec{v}(i) + 1} \times \cdots \times \sym_{\vec{v}(n)}}. \]

\subsection{Homological stability for coloured configuration spaces}
In this section we will show that the stabilisation maps defined in \cref{sec: stab maps} induce isomorphisms in homology in a range. 

\begin{lemma} \label{hinj coloured} Let $X$ be a connected open manifold of dimension greater than one that is the interior of a compact manifold with boundary. The map 
	\[ (\stab_i)_*: H_*(\conf_{\vec{v}}(X), \mathbb{Z}) \rightarrow H_*(\conf_{\vec{v} + e_i}(X), \mathbb{Z}) \]
is always monic.
\end{lemma}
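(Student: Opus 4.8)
The plan is to prove injectivity by exhibiting a (stable) left inverse to $(\stab_i)_*$ up to the range where we need it, using a scanning/group-completion style argument adapted to the coloured setting. Concretely, I would first reduce to the model case $X = \mathbb{R}^{d-1} \times (0,1)$ by noting that an embedding of a collar neighbourhood of $\partial_0$ into $X$ is compatible with the stabilisation maps, and that any open manifold admitting a boundary receives such an embedding; the usual ``inject first into a disc/half-space, then map back'' trick then lets one compare $\conf_{\vec v}(X)$ with $\conf_{\vec v}$ of the half-open strip. The point of this reduction is that for the half-open strip one has an $E_1$-algebra (or partial monoid) structure: stacking configurations in the $(0,1)$-direction gives a multiplication, and $\stab_i$ becomes multiplication by a single point of colour $i$. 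Once everything is phrased inside this monoid, injectivity of $(\stab_i)_*$ becomes the statement that multiplication by the class of a colour-$i$ point is injective on homology, which is a standard consequence of the fact that the group completion of this monoid has homology given by localising at these classes, together with the observation that the monoid is (in each fixed color-degree $\vec v(i)$) free enough that no torsion phenomenon obstructs injectivity.

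The key steps, in order, are: (1) set up the collar and the embedding $X' \hookrightarrow X$ so that $\stab_i$ factors as $\conf_{\vec v}(X) \to \conf_{\vec v + e_i}(X') \to \conf_{\vec v + e_i}(X)$ and record that the second map is a homotopy equivalence; (2) build the partial-monoid structure on $\coprod_{\vec v} \conf_{\vec v}(M)$ for $M$ the half-open strip, where the product is ``place side by side'', and identify $\stab_i$ with left multiplication by the point-class $\alpha_i \in H_0(\conf_{e_i}(M))$; (3) apply the group-completion theorem (McDuff--Segal) to see that $H_*$ of the limit $\conf_{\infty \cdot e_i, \vec w}(M)$ (stabilising only in the $i$-th colour) is $H_*(\coprod \conf_{\vec v}(M))[\alpha_i^{-1}]$; (4) deduce from the explicit structure of this localisation — each graded piece is a module over the group ring of a free abelian monoid generated by $\alpha_i$, and multiplication by $\alpha_i$ is the defining localisation map — that $(\stab_i)_* = (\cdot \alpha_i)_*$ is injective on each $H_*(\conf_{\vec v}(M))$; and finally (5) transport injectivity back to $X$ via step (1), using that retraction $X' \to X$ splits the stabilisation.

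An alternative, perhaps cleaner, route avoids group completion: one can let the points \emph{not} of colour $i$ ``ride along'' the classical injectivity proof for unordered configuration spaces. That is, fixing $\vec w = \vec v - \vec v(i) e_i$, the assignment sending a configuration to the subspace swept out by the colour-$i$ points defines a fibration $\conf_{\vec v}(X) \to \conf_{\vec w}(X)$-fibered family whose fibres are ordinary unordered configuration spaces of the complement of the other-coloured points, and over each fibre the classical stabilisation-by-a-point map is split injective by the standard argument (e.g. the one in \cite{orw13} or \cite{segal73}); a Serre spectral sequence comparison — exactly the mechanism the paper already advertises for \cref{hstab coloured} — then upgrades fibrewise split injectivity to injectivity on the total space, since a splitting of the map of fibrations induces a splitting on $E_2$-pages and hence on abutments. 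I would likely present this second argument, as it is self-contained given the tools in the excerpt.

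The main obstacle I expect is making the ``free ride'' genuinely split rather than merely injective in a range: classical injectivity of the configuration-space stabilisation map is proven via an explicit transfer/scanning map that provides a one-sided inverse, and I must check that this construction is natural enough in $X$ — in particular natural with respect to removing the other-coloured points and varying them — that it assembles into a map of fibrations (or at least a fibrewise map inducing a splitting on the spectral sequence). Verifying this naturality, and handling the basepoint/collar bookkeeping so that the splitting is defined on the nose and not just up to homotopy that might fail to be fibrewise, is where the real work lies; everything else is formal manipulation of spectral sequences and standard configuration-space technology.
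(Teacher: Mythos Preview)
Both of your routes have genuine gaps. In the group-completion route, step~(4) is circular: the group-completion theorem identifies the homology of the telescope with the localisation $H_*\bigl(\coprod_{\vec v}\conf_{\vec v}(M)\bigr)[\alpha_i^{-1}]$, but the kernel of the map from a fixed $H_*(\conf_{\vec v}(M))$ into that localisation is precisely the $\alpha_i$-torsion, and showing that this torsion vanishes \emph{is} the injectivity statement you want. (Your step~(1) is also oriented the wrong way: a collar embedding produces a map from strip-configurations into $X$-configurations, not a retraction, so injectivity for the strip does not transport to $X$.) In the fibration route, the obstacle you flag at the end is fatal as stated. The classical transfer is only a homology-level splitting, not a continuous map $\conf_{n+1}\to\conf_n$; even if it is natural enough in the punctures to split the map of local coefficient systems and hence the map of $E^2$-pages, it does not give a map of spectral sequences in the reverse direction. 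Injectivity on $E^2$ alone does not propagate to $E^\infty$: a nonzero class in the source can map to something that becomes a boundary on a later page of the target. Repairing this would require the splitting at the level of filtered complexes, i.e.\ a transfer defined on the total coloured space itself---at which point the fibration is superfluous.

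The paper does exactly that: it realises your ``free ride'' intuition directly on the coloured space, bypassing any spectral-sequence comparison. One defines Dold's subset maps on the colour-$i$ points while carrying the other-coloured points along unchanged, obtaining honest continuous maps $\conf_{\vec v}(X)\to\symp^{\binom{\vec v(i)}{k}}\conf_{\vec v-(\vec v(i)-k)e_i}(X)$. After applying $\symp^\infty$ and the Dold--Thom theorem, these yield homomorphisms $\tau_{k,\vec v(i)}$ satisfying the recursion $\tau_{k,\vec v(i)+1}\circ s_{\vec v(i)}\equiv\tau_{k,\vec v(i)}\pmod{\operatorname{im}s_{k-1}}$ with $\tau_{\vec v(i),\vec v(i)}=\mathrm{id}$, and Dold's Lemma~2 then gives split injectivity of $(\stab_i)_*$ outright.
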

\begin{proof}
By symmetry it is enough to prove the result for $(\stab_1)_*$. We will use Lemma 2 of \cite{Dold62}. Let $\symp^n(X)$ denote the $n^{th}$ symmetric product of $X$ and recall the Dold-Thom theorem that $\pi_*(\symp^\infty(X)) \cong H_*(X)$. We will show that the induced stabilisation maps
	\[ (s_{\vec{v}(1)}) : \pi_*(\symp^\infty \conf_{\vec{v}}(X)) \rightarrow \pi_*(\symp^\infty \conf_{\vec{v}+e_1}(X)) \]
are injective for all $\vec{v}(1) \in \mathbb{Z}_{\geq 0}$.
Define 
	\[P_{k,\vec{v}(1)} : \conf_{\vec{v}}(X) \rightarrow \symp^{\vec{v}(1) \choose k} \conf_{\vec{v} - (\vec{v}(1) + k)e_1}(X) \]
	 to be the map that on the points coloured by the first colour is the map 
		\[ (p_1, \ldots, p_{\vec{v}(1)}) \mapsto \prod_{ \{p_{i_1}, \ldots , p_{i_k} \} \subset \{ p_1, \ldots, p_{\vec{v}(1)} \}} (p_{i_1}, \ldots, p_{i_k}), \]
and leaves the points of other colours the same.
Define 
	\[ \tau_{k,\vec{v}(1)} : \pi_* (\symp^\infty \conf_{\vec{v}}(X)) \rightarrow \pi_* (\symp^\infty \conf_{\vec{v} - (\vec{v}(1) + k)e_1}(X))\]
 as the induced map of
	\begin{align*} \symp^\infty \conf_{\vec{v}}(X) \xrightarrow{\symp^\infty P_{k,\vec{v}(1)}} \symp^\infty ( \symp^{ \vec{v}(1) \choose k}& \conf_{\vec{v} - (\vec{v}(1) + k)e_1}(X))  \\
						& \longrightarrow \symp^\infty \conf_{\vec{v} - (\vec{v}(1) + k)e_1}(X).
	\end{align*}
Here the last map is the map from $\symp^\infty \symp^{ \vec{v}(1) \choose k} \rightarrow \symp^{\infty}$ given by
	\[ (\bm{a}, \bm{b}, \ldots) \mapsto ( a_1, \ldots, a_{ \vec{v}(1) \choose k}, b_1, \ldots, b_{ \vec{v}(1) \choose k}, \ldots) \]
where $\bm{a} = (a_1, \ldots, a_{ \vec{v}(1) \choose k}) \in \symp^{ \vec{v}(1) \choose k}$, $\bm{b} = (b_1, \ldots, b_{{ \vec{v}(1) \choose k}}) \in \symp^{ \vec{v}(1) \choose k}, \ldots$
%

Clearly $\tau_{\vec{v}(1),\vec{v}(1)} = id$ and for $0 \leq k \leq \vec{v}(1) $ we have $\tau_{k,\vec{v}(1) + 1} s_{\vec{v}(1)} \equiv \tau_{k,\vec{v}(1)} \mod{im( s_{k-1})}$. Thus by Lemma 2 of \cite{Dold62}, the stabilisation map $s_k$ is split injective for $k \geq 0$.
\end{proof}

\begin{theorem} \label{hstab coloured}  Let $X$ be a connected open manifold of dimension greater than one that is the interior of a compact manifold with boundary. For each $i$, the map 
	\[ (\stab_i)_*: H_*(\conf_{\vec{v}}(X), \mathbb{Z}) \rightarrow H_*(\conf_{\vec{v} + e_i}(X), \mathbb{Z}) \]
is an isomorphism for $* \leq \frac{\vec{v}(i)}{2}$.
\end{theorem}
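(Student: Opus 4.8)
The plan is to split the statement into injectivity and surjectivity. Injectivity is already provided, with no restriction on the degree, by \cref{hinj coloured}, which applies verbatim; so the real content is to show that $(\stab_i)_*$ is \emph{surjective} for $* \leq \frac{\vec v(i)}{2}$. I would prove this by induction on the number $c(\vec v)$ of colours $j \neq i$ with $\vec v(j) > 0$. The base case $c(\vec v) = 0$ is the one in which colour $i$ is the only colour present: then $\conf_{\vec v}(X) = \conf_{\vec v(i)}(X)$ and $\conf_{\vec v + e_i}(X) = \conf_{\vec v(i)+1}(X)$, the map $\stab_i$ is the ordinary stabilisation map of unordered configuration spaces, and surjectivity in the stated range is exactly the classical homological stability theorem recalled in the introduction (see \cite{orw13}). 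When in addition $\vec v(i)=0$ this is the trivial statement that $H_0$ of a point surjects onto $H_0$ of the connected space $X$.

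For the inductive step, assume $c(\vec v) \geq 1$ and choose a colour $m \neq i$ with $\vec v(m) > 0$. I would study the ``forget all colours but $m$'' map
\[ \pi \colon \conf_{\vec v}(X) \longrightarrow \conf_{\vec v(m)}(X). \]
Exactly as for the Fadell--Neuwirth fibrations, $\pi$ is a fibre bundle --- the relevant symmetric groups act freely on the ordered models, so passing to quotients causes no trouble --- and its fibre over a configuration $q$ of $\vec v(m)$ points is the coloured configuration space $\conf_{\vec v - \vec v(m) e_m}(X \setminus q)$. Two things must be checked here: that $X \setminus q$ is again a connected open manifold that is the interior of a compact manifold with boundary (remove small closed disks around the points of $q$), and that $c(\vec v - \vec v(m) e_m) = c(\vec v) - 1$ while the $i$-th coordinate is unchanged; together these make the inductive hypothesis applicable to the fibre. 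Moreover $\stab_i$, which inserts a point of colour $i$ in a collar of $\del_0 \subset \del \bar X$, commutes with $\pi$ and restricts on each fibre to the stabilisation map of that fibre (the new point can be chosen disjoint from $q$) --- this is the ``free ride'' mentioned in the introduction. Hence $\stab_i$ is a map of fibre bundles over the fixed base $B := \conf_{\vec v(m)}(X)$ whose effect on each fibre is, by the inductive hypothesis, an isomorphism on $H_*$ for $* \leq \frac{\vec v(i)}{2}$.

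Finally I would compare Serre spectral sequences. The map $\stab_i$ induces a morphism of Serre spectral sequences abutting to $(\stab_i)_*$ whose $E^2$-page map is $H_p\big(B;\, (\stab_i)^{\mathrm{fib}}_*\big)$, the homology of $B$ with coefficients in the morphism of local systems given by the fibrewise stabilisation map; by the previous paragraph this is an isomorphism on $E^2_{p,q}$ for all $p$ and all $q \leq \frac{\vec v(i)}{2}$. A standard comparison argument --- equivalently, the relative Serre spectral sequence of $\stab_i$ vanishes in total degree $\leq \frac{\vec v(i)}{2}$ --- then shows that $(\stab_i)_*$ is surjective in total degree $\leq \frac{\vec v(i)}{2}$, and combining this with the injectivity from \cref{hinj coloured} completes the induction.

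I expect the genuine obstacle to be the bookkeeping of the inductive step rather than the spectral sequence: one must check carefully that $\pi$ really is a fibre bundle with fibre a coloured configuration space on a manifold of the same type, so that the inductive hypothesis applies, and that $\stab_i$ is a strictly fibrewise map over $B$. Once that is in place, the spectral sequence comparison and the appeal to \cref{hinj coloured} are routine.
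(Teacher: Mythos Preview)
Your proposal is correct and follows essentially the same route as the paper: reduce to surjectivity via \cref{hinj coloured}, induct on the number of nontrivial colours, and for the inductive step use the Fadell--Neuwirth--type fibration forgetting one colour $m\neq i$ together with a Serre spectral sequence comparison. The paper phrases the last step directly in terms of the relative Serre spectral sequence (your parenthetical ``equivalently'' remark), which is the cleanest way to turn the fibrewise vanishing into vanishing of $H_*(\conf_{\vec v+e_i}(X),\conf_{\vec v}(X))$; your more informal ``compare $E^2$-pages'' version would need that relative formulation to be made precise, but you already flag it.
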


\begin{proof} 
By \cref{hinj coloured}, it is enough to show that the homology of the homotopy cofibre
 \[ H_*( \conf_{\vec{v} + e_i}(X), \conf_{\vec{v}}(X) ) = 0 \]
for $2* \leq \vec{v}(i)$. By symmetry, we only need to prove the theorem for stabilising by adding $e_1$.

We induct on the number of colours with a positive number of points. If there is only one colour, this is the usual homological stability for configuration spaces where the stability range is $* \leq n/2$ \cite{segal73, mcduff75, segal79, orw13}.

If there are $m \geq 2$ colours, we have the following commutative diagram of fibrations.
\[ \xymatrix{
	\conf_{\vec{v} - \vec{v}(m).e_m}(X_{m}) \ar[r] \ar[d]^{\stab_i} & \conf_{\vec{v}}(X) \ar[r]^{\pi} \ar[d]^{\stab_i} & \conf_{\vec{v}(m)}(X) \ar[d]^{=} \\
	\conf_{\vec{v} - \vec{v}(m).e_m + e_1}(X_{m}) \ar[r] & \conf_{\vec{v} + e_1}(X) \ar[r]^{\pi'} & \conf_{\vec{v}(m)}(X)
} \]

We use $X_m$ to denote $X$ with $m$ points removed. Since $X$ is connected, the choice of the $m$ points will not matter. The fibrations $\pi$ and $\pi'$ are the maps that forget all points not coloured by the $m^{th}$ colour. The middle and left maps down are the stabilisation maps. We therefore have a relative Serre spectral sequence with $E^2$ term
	\[E^2_{pq} = H_p ( \conf_{\vec{v}(m)}(X) , H_q( \conf_{\vec{v} - \vec{v}(m).e_m + e_1}(X_{m}), \conf_{\vec{v} - \vec{v}(m).e_m}(X_{m}) ) \]
converging to
	\[ E^\infty_{pq} = H_{p+q} ( \conf_{\vec{v} + e_1}(X), \conf_{\vec{v}}(X)). \]
By induction, 
	\[ H_q( \conf_{\vec{v} - \vec{v}(m).e_m + e_1}(X_{m}), \conf_{\vec{v} - \vec{v}(m).e_m}(X_{m}) ) = 0\]
 for $q \leq {\vec{v}(1)/2}$. Thus 
 	\[H_* ( \conf_{\vec{v} + e_1}(X), \conf_{\vec{v}}(X)) = 0 \] 
for $ * \leq \vec{v}(1)/2$. 
\end{proof}

%
%

\section{Stabilisation of symmetric complements} \label{sec: symcomp}
Given a partition $\lambda$ of $n$ and a manifold $X$, define $w_\lambda(X)$ to be the subspace of $\symp^n X$ with multiplicities prescribed by $\lambda$. For example if $\lambda = 1 + 4 + 4$ then $w_\lambda(X)$ consists of one point of multiplicity $1$ and two points of multiplicity $4$. Let $\vec{\lambda}$ be the vector whose $i^{th}$ entry corresponds to the number of times $i$ appears in the partition $\lambda$. In the previous example, $\vec{\lambda} = (1,0,0,2)$. 

\begin{proposition} \label{colour comp} $w_\lambda(X) \cong \conf_{\vec{\lambda}}(X)$.
\end{proposition}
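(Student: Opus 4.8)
The plan is to construct an explicit homeomorphism between $w_\lambda(X)$ and $\conf_{\vec\lambda}(X)$ by tracking how the data of a point in $\symp^n X$ of prescribed multiplicity type corresponds to a coloured configuration. Recall that a point of $w_\lambda(X)$ is an unordered collection of points of $X$ together with multiplicities that, as a multiset of positive integers, equals $\lambda$. If $\lambda$ has $\vec\lambda(i)$ parts equal to $i$, then such a point is precisely the data of: $\vec\lambda(1)$ distinct points of multiplicity $1$, $\vec\lambda(2)$ distinct points of multiplicity $2$, and so on — and crucially, all of these points (across all multiplicities) are pairwise distinct in $X$, since a point of $\symp^nX$ with a given support assigns a single multiplicity to each point of its support. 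This is exactly the data of a configuration of $\norm{\vec\lambda}$ distinct points in $X$, partitioned into groups of size $\vec\lambda(i)$ indexed by $i$, with points in the same group indistinguishable — i.e.\ a point of $\conf_{\vec\lambda}(X)$, where the "colour" $i$ records the multiplicity $i$.

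Concretely, I would first set up the map $\conf_{\vec\lambda}(X) \to w_\lambda(X)$. Given a coloured configuration, represented by $(x_1,\dots,x_{\norm{\vec\lambda}}) \in \pconf_{\norm{\vec\lambda}}(X)$ where the first $\vec\lambda(1)$ coordinates have colour $1$, the next $\vec\lambda(2)$ have colour $2$, etc., send it to the element of $\symp^n X$ that assigns to each $x_j$ the multiplicity equal to its colour. Since the $x_j$ are distinct, the total multiplicity is $\sum_i i\cdot\vec\lambda(i) = n$, and the multiplicity multiset is exactly $\lambda$, so the image lies in $w_\lambda(X)$. This map is constant on the orbits of $\sym_{\vec\lambda(1)}\times\cdots\times\sym_{\vec\lambda(n)}$ (permuting same-colour points doesn't change the resulting formal sum), hence descends to $\conf_{\vec\lambda}(X)$. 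It is continuous because the quotient maps $\pconf_n(X)\to \symp^nX$ and $\pconf_{\norm{\vec\lambda}}(X)\to\conf_{\vec\lambda}(X)$ are, and the lifted map $\pconf_{\norm{\vec\lambda}}(X)\to\symp^nX$ is visibly continuous.

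For the inverse, I would argue that the map just constructed is a continuous bijection: injectivity holds because from the formal sum one recovers the support points together with their multiplicities, hence the coloured configuration up to same-colour permutation; surjectivity is the observation in the first paragraph that every element of $w_\lambda(X)$ arises this way. To upgrade this to a homeomorphism, the cleanest route is to note that both spaces are quotients of $\pconf_{\norm{\vec\lambda}}(X)$ by group actions — $w_\lambda(X)$ is the image of $\pconf_{\norm{\vec\lambda}}(X)$ in $\symp^nX$ under the map sending a configuration to its multiplicity-weighted sum, and one checks directly that two configurations have the same image iff they differ by an element of $\sym_{\vec\lambda(1)}\times\cdots\times\sym_{\vec\lambda(n)}$. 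Thus $w_\lambda(X)$ carries the quotient topology from $\pconf_{\norm{\vec\lambda}}(X)$ by exactly this group, which is by definition $\conf_{\vec\lambda}(X)$; the universal property of quotients then gives that the bijection above is a homeomorphism.

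The main obstacle — really the only point requiring care — is verifying that $w_\lambda(X)$, with its subspace topology from $\symp^nX$, actually coincides with the quotient topology induced from $\pconf_{\norm{\vec\lambda}}(X)$, rather than merely receiving a continuous bijection from it. Since $X$ is a manifold (in particular locally compact Hausdorff), symmetric products behave well and the relevant quotient maps are open, so this goes through; the key point to spell out is that the preimage of $w_\lambda(X)$ under $\pconf_n(X)\to\symp^nX$ splits off a clean factor, after which one identifies the $\sym_n$-orbit structure on that preimage. Once this is established the proposition follows, and it sets up the stratification of $\bar w^c_\lambda(X)$ by coloured configuration spaces used in the proof of \cref{hstab symcomp}.
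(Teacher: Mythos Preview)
Your proof is correct and follows essentially the same approach as the paper: define the map $\conf_{\vec\lambda}(X)\to w_\lambda(X)$ by sending a point of colour $i$ to a point of multiplicity $i$, and observe the inverse recolours by multiplicity. You are in fact more careful than the paper about why the continuous bijection is a homeomorphism (the paper simply writes down the map and its set-theoretic inverse); your quotient-topology argument is a welcome addition.
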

\begin{proof}
Define $f: \conf_{\vec{\lambda}}(X) \rightarrow w_\lambda(X)$ by sending a point in $\conf_{\vec{\lambda}}(X)$ to the point in $w_\lambda(X)$ having $1$ point of each of the first colour, $2$ points of each of the second colour, and so on. The multiplicities of the new points will be given by $\lambda$ so the configuration lands in $w_\lambda(X)$. 
The inverse $f^{-1}: w_\lambda(X) \rightarrow \conf_{\vec{\lambda}}(X)$ is the map that removes multiple points, but remembers this by colouring multiplicity $i$ points by the $i^{th}$ colour.
\end{proof}

We have shown that the spaces $\conf_{\vec{\lambda}}(X)$ satisfy homological stability for increasing the number of points of a given colour. Hence the spaces $w_\lambda(X)$ also satisfy homological stability for increasing the number of points of a given multiplicity. We are primarily interested in increasing the number of multiplicity $1$ points.

Denote by $\bar{w}_\lambda(X)$ the closure of $w_\lambda(X)$ in $\symp^n X$. We will be interested in studying the complement $\bar{w}_\lambda^c(X)$ of $\bar{w}_\lambda(X)$ in $\symp^nX$. One thinks of the $\bar{w}_\lambda^c(X)$ as the subspace of $\symp^nX$ consisting of elements where the partition of $n$ given by the multiplicity of points is not a subpartition of $\lambda$. 

For a partition $\lambda$ of $n$, we define a \emph{subpartition of $\lambda$} to be a partition $\lambda'$ of $n$ that can be obtained from $\lambda$ by adding together the entries of the partition. For example, if $\lambda = 2 + 2 + 3 + 4$ then $\lambda' = 2 + 3 + 6$ is a subpartition of $\lambda$ obtained by adding $2$ and $4$. We use the notation $\lambda' \leq \lambda$ for subpartition.

For brevity the space $X$ will  sometimes be omitted from our notation.

\begin{example} $\bar{w}^c_{1^j2}$ is the unordered configuration space $\conf_{j+2} X$, where $1^j2$ is the notation we use for the partition $1 + \cdots + 1 + 2$ where we have added $1$ $j$ times. This is because $\bar{w}^c_{1^j2}$ is the subset of $\symp^{j+2}(X)$ with only multiplicity one points. Thus an element of $\bar{w}^c_{1^j2}$ consists of elements $\{ x_1, \ldots, x_{j+2} \}$ of $X$ with $x_i \neq x_j$.
\end{example}

The main theorem of this section is a rational homological stability theorem for the spaces $\bar{w}^c_\lambda(X)$, for certain $X$.
\begin{theorem} \label{hstab symcomp} Let $X$ be a connected open oriented manifold of dimension greater than one that is the interior of a compact manifold with boundary. Let $\lambda$ be a partition of $n$. There are maps
	\[ \stab_*: H_*(\bar{w}^c_{1^j\lambda}(X); \mathbb{Q}) \rightarrow H_*(\bar{w}^c_{1^{j+1}\lambda}(X); \mathbb{Q}) \] 
which are isomorphisms for $* \leq \frac{j+n}{4} - \frac{1}{2}$. 
\end{theorem}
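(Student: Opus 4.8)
The plan is to use that $\bar{w}^c_{1^j\lambda}(X)$ is an open subset of the rational homology manifold $\symp^{n+j}X$, stratified by coloured configuration spaces, and to run a spectral sequence argument feeding off \cref{hstab coloured}. First I would record the stratification: for each partition $\mu$ of $n+j$ with $\mu\not\leq 1^j\lambda$, the locus $w_\mu(X)\subseteq\symp^{n+j}X$ of elements whose multiplicity partition is exactly $\mu$ is, by \cref{colour comp}, the coloured configuration space $\conf_{\vec{\mu}}(X)$, a smooth manifold of dimension $d\,\ell(\mu)$, where $d=\dim X$ and $\ell(\mu)$ is the number of parts of $\mu$; these strata fit together to give $\bar{w}^c_{1^j\lambda}(X)$. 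Since $X$ is oriented, $\symp^{n+j}X$ is a rational homology manifold (its local models $\symp^k\mathbb{R}^d$ are), hence so is its open subset $\bar{w}^c_{1^j\lambda}(X)$, and Poincar\'e duality gives $H_i(\bar{w}^c_{1^j\lambda}(X);\mathbb{Q})\cong H^{d(n+j)-i}_c(\bar{w}^c_{1^j\lambda}(X);\mathbb{Q})$. The stabilisation map adds a multiplicity-one point in a collar of $\del\bar{X}$; up to homotopy it is an embedding of $\bar{w}^c_{1^j\lambda}(X)$ onto a codimension-$d$ closed sub-rational-homology-manifold of $\bar{w}^c_{1^{j+1}\lambda}(X)$ with trivial normal bundle, so it carries a Gysin map $H^k_c(\bar{w}^c_{1^j\lambda}(X);\mathbb{Z})\to H^{k+d}_c(\bar{w}^c_{1^{j+1}\lambda}(X);\mathbb{Z})$, Poincar\'e dual to the ordinary degree-preserving stabilisation on rational homology. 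On the stratum $w_\mu\cong\conf_{\vec{\mu}}(X)$ it restricts to the coloured stabilisation $\stab_1\colon\conf_{\vec{\mu}}(X)\to\conf_{\vec{\mu}+e_1}(X)$ of \cref{hstab coloured}, landing in the stratum $w_{1\mu}$; a short combinatorial check gives $\mu\not\leq 1^j\lambda$ if and only if $1\mu\not\leq 1^{j+1}\lambda$, so this is well defined, and the strata of $\bar{w}^c_{1^{j+1}\lambda}(X)$ missed by the image are exactly the $w_\nu$ with $\nu$ having no part equal to one.

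Next I would set up, for each $j$, the spectral sequence of the filtration of $\bar{w}^c_{1^j\lambda}(X)$ by unions of strata of bounded dimension, converging to $H^*_c(\bar{w}^c_{1^j\lambda}(X);\mathbb{Z})$, with
\[ E_1^{p,q}\;=\;\bigoplus_{\substack{\mu\vdash n+j,\ \mu\not\leq 1^j\lambda\\ d\,\ell(\mu)=p}}H^{p+q}_c(\conf_{\vec{\mu}}(X);\mathbb{Z}). \]
Poincar\'e duality on each stratum identifies $H^m_c(\conf_{\vec{\mu}}(X))\cong H_{d\ell(\mu)-m}(\conf_{\vec{\mu}}(X))$, and under it the Gysin map on that stratum becomes the degree-preserving stabilisation, which by \cref{hstab coloured} is an isomorphism once $\vec{\mu}(1)\geq 2(d\ell(\mu)-m)$. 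Thus the stabilisation induces a bidegree-shifting map from the spectral sequence for $j$ to the one for $j+1$ which, on $E_1$, is an isomorphism onto the summands indexed by partitions of the form $1\mu$ in a range governed by \cref{hstab coloured}; the remaining summands of the page for $j+1$, indexed by partitions $\nu$ with all parts $\geq 2$, live on manifolds of dimension $d\ell(\nu)\leq d(n+j+1)/2$ and so contribute only in total degrees $\leq d(n+j+1)/2$ — the bottom half of the range, which is decoupled (both ways) from the top band under the differentials. Since homological stability in degrees $*\leq\frac{j+n}{4}-\frac12$ corresponds, through Poincar\'e duality, to stability of $H^*_c$ in a band of high cohomological degrees, these omitted strata do not interfere.

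Comparing the two spectral sequences, one propagates the $E_1$-isomorphism through the differentials to an isomorphism on $E_\infty$, hence on $H^*_c(-;\mathbb{Z})$ over that band, and then dualises by rational Poincar\'e duality. The bound $*\leq\frac{j+n}{4}-\frac12$ emerges as follows: a class in $H_i(\bar{w}^c_{1^j\lambda}(X);\mathbb{Q})$ is dual to one in $H^{d(n+j)-i}_c$, which in the spectral sequence can only be supported on strata $w_\mu$ with $d\ell(\mu)\geq d(n+j)-i$, and hence (using only $d\geq 1$) with $\ell(\mu)\geq(n+j)-i$; a partition of $n+j$ with at least $(n+j)-i$ parts has at least $2\ell(\mu)-(n+j)\geq (n+j)-2i$ parts equal to one, so $\vec{\mu}(1)\geq (n+j)-2i$; and \cref{hstab coloured} makes the stabilisation an isomorphism on that stratum's contribution exactly when $i\leq\vec{\mu}(1)/2$. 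Requiring $i\leq\tfrac12((n+j)-2i)$ gives $i\leq\frac{n+j}{4}$, and absorbing the off-by-one shifts — between $\vec{\mu}(1)$ and $\vec{\mu}(1)\pm1$ as points are added, and from the spectral-sequence differentials — accounts for the $-\frac12$. This reduction is what halves the range $\vec{\mu}(1)/2$ of \cref{hstab coloured} (and is presumably not sharp when $\dim X\geq 2$).

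The main obstacle is making this last argument precise: one must construct the stabilisation as an honest Gysin map inducing a map of stratification spectral sequences, verify that the (possibly long) differentials erode the range only by a bounded amount, and control — uniformly in $\lambda$ — the location of the omitted strata $w_\nu$. A secondary technical point is orientations: when $\dim X$ is odd neither the coloured configuration spaces nor $\symp^{n+j}X$ need be orientable, so Poincar\'e duality has to be taken with the orientation local system (or $\dim X$ assumed even); and one must check that the local models $\symp^k\mathbb{R}^d$ are rational homology manifolds, which is exactly the reason the final statement is only rational.
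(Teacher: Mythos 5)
Your proposal follows essentially the same route as the paper: stratify $\bar w^c_{1^j\lambda}(X)$ by the loci $w_\mu\cong\conf_{\vec\mu}(X)$, run the filtration spectral sequence converging to $H^*_c$, feed in \cref{hstab coloured} on each stratum, and dualise rationally at the end. The two places you rightly flag as needing care are exactly where the paper supplies explicit lemmas: the erosion of the stability range as it passes through the spectral sequence is controlled by \cref{ssbound} (an isomorphism on $E^1$ for $p+q\le 2*+1$ gives one on $E^\infty$ for $p+q\le *$, which is where the halving and the $-\tfrac12$ really come from), and the matching of strata between $\bar w^c_{1^j\lambda}$ and $\bar w^c_{1^{j+1}\lambda}$ is \cref{strata bound}, which shows the component sets of $M[p]$ agree for $p\le\tfrac{n+j}{2}\dim X$; the paper does not need a Gysin map, just naturality of the connecting homomorphism for the triples $(F_{p-1},F_p,F_{p+1})$, and it absorbs the ``omitted strata'' question into \cref{ssbound} rather than a separate decoupling claim.

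One caution on your closing heuristic: after dualising on a stratum of codimension $c$, the homological degree in which \cref{hstab coloured} must apply is $i-c$, not $i$, and the bound $\ell(\mu)\ge (n+j)-i$ should really be $\ell(\mu)\ge (n+j)-c/\dim X$ with $c\le i$. As written, the condition ``$i\le\vec\mu(1)/2$'' does not literally produce the range; the two slippages happen to compensate so you land on $(n+j)/4$, but the clean way to get there is the paper's two-step weakening: the $E^1$-comparison is an isomorphism for $q+p/\dim X\le (j+n)/2$, hence in particular for $p+q\le (j+n)/2$, and then \cref{ssbound} cuts that to $p+q\le (j+n)/4-\tfrac12$ on $E^\infty$.
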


\begin{remark}
In \cite{km13}, Kupers and Miller prove homological stability for topological chiral homology. As a consequence they prove integral homological stability for the spaces $\bar{w}^c_{1^j\lambda}$, where $\lambda = (c+1)^{m+1}$, $c, m \in \mathbb{Z}_{\geq 1}$. Our result differs by proving the theorem for all $\lambda$, though we only obtain a rational result. In the case of symmetric complements, our stability range is an improvement on their stability range of $* \leq \lf \frac {j-(m+1)(c+1)+1}{2c} \rf$.
\end{remark}

\cref{hstab symcomp} implies in particular the following which is a version of a conjecture by Vakil and Matchett Wood \cite[Conjecture F]{vw12} for open manifolds.

\begin{corollary} For a connected open oriented manifold $X$ of dimension greater than one that is the interior of a compact manifold with boundary, the betti numbers $b_i(j) := \dim H_i(\bar{w}^c_{1^j \lambda}(X) ; \mathbb{Q})$ stabilise as $j \rightarrow \infty$ i.e., the limit $\lim_{j \rightarrow \infty} b_i(j)$ exists.
\end{corollary}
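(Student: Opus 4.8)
The plan is to exhibit $\bar{w}^c_{1^j\lambda}(X)$ as a stratified space whose strata are coloured configuration spaces, run the spectral sequence of this stratification in compactly supported cohomology, feed in \cref{hstab coloured} stratum by stratum, and then dualise back to ordinary homology. Concretely: for each partition $\mu$ of $j+n$ the multiplicity-$\mu$ locus $w_\mu(X)\subseteq\symp^{j+n}X$ is, by \cref{colour comp}, homeomorphic to $\conf_{\vec\mu}(X)$, which is a manifold of dimension $d\,\ell(\mu)$ (with $d=\dim X$ and $\ell(\mu)$ the number of parts of $\mu$), since the relevant product of symmetric groups acts freely on $\pconf_{\ell(\mu)}(X)$. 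Letting points collide only coarsens a partition, so $\overline{w_{1^j\lambda}}(X)=\bigsqcup_{\mu\leq 1^j\lambda}w_\mu(X)$ and hence $\bar{w}^c_{1^j\lambda}(X)=\bigsqcup_{\mu\not\leq 1^j\lambda}w_\mu(X)$; coarsening strictly decreases $\ell$, so $F_p:=\bigsqcup_{d\,\ell(\mu)\leq p,\ \mu\not\leq 1^j\lambda}w_\mu(X)$ is an increasing filtration by closed subsets with $F_p\setminus F_{p-1}=\bigsqcup_{\ell(\mu)=p/d,\ \mu\not\leq 1^j\lambda}w_\mu(X)$. This gives a spectral sequence
\[ E_1^{p,q}=\bigoplus_{\ell(\mu)=p/d,\ \mu\not\leq 1^j\lambda}H^{p+q}_c\big(\conf_{\vec\mu}(X);\mathbb{Z}\big)\ \Longrightarrow\ H^{p+q}_c\big(\bar{w}^c_{1^j\lambda}(X);\mathbb{Z}\big). \]
The stabilisation map is, up to homeomorphism, a codimension-$d$ closed embedding compatible with the stratifications (adding a fresh multiplicity-one point sends $w_\mu$ into $w_{\mu\cup\{1\}}$ by the coloured stabilisation $\stab_1\colon\conf_{\vec\mu}(X)\to\conf_{\vec\mu+e_1}(X)$), so it induces a map of spectral sequences whose $E_1$-level map on the $\mu$-summand is the Gysin pushforward of $\stab_1$. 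Under Poincaré--Lefschetz duality on each manifold stratum (integrally, with the orientation local system, which is a sign twist and trivial when $d$ is even) this is identified with $(\stab_1)_*$ on $H_{d\,\ell(\mu)-p-q}(\conf_{\vec\mu}(X))$, hence by \cref{hstab coloured} it is an isomorphism once $d\,\ell(\mu)-p-q\leq\tfrac12\vec\mu(1)$.

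The heart of the argument is then a bookkeeping estimate. A stratum $w_\mu$ contributes to $E_1$ only in the single column $p=d\,\ell(\mu)$, and only in total degrees $0\leq p+q\leq d\,\ell(\mu)$; so the strata contributing to a fixed total degree $N$ all have $\ell(\mu)\geq N/d$, and any partition of $j+n$ with $L$ parts has at least $2L-(j+n)$ parts equal to $1$ — forcing $\vec\mu(1)$ to be large exactly when $\ell(\mu)$ is. Balancing this lower bound on $\vec\mu(1)$ against the stability condition $d\,\ell(\mu)-N\leq\tfrac12\vec\mu(1)$ (and checking that the strata with few multiplicity-one points are too low-dimensional to reach the relevant degrees) produces an explicit threshold $N_0=N_0(j)$, linear in $j$, above which the map of $E_1$-pages is an isomorphism in all total degrees $\geq N_0$ and injective at $N_0-1$. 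As the filtration is finite and the differentials raise total degree by one, the standard comparison of spectral sequences then yields that the Gysin pushforward $H^N_c(\bar{w}^c_{1^j\lambda}(X);\mathbb{Z})\to H^{N+d}_c(\bar{w}^c_{1^{j+1}\lambda}(X);\mathbb{Z})$ is an isomorphism for $N\geq N_0$ — integral homological stability for compactly supported cohomology.

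Finally I would return to ordinary homology. The space $\bar{w}^c_{1^j\lambda}(X)$ is an open subset of $\symp^{j+n}X$, a rational homology manifold of dimension $(j+n)d$ (a finite quotient of $X^{j+n}$), so it satisfies rational Poincaré duality, with a sign twist when $d$ is odd that is immaterial after tensoring with $\mathbb{Q}$. Applying this to $\bar{w}^c_{1^j\lambda}(X)$ and $\bar{w}^c_{1^{j+1}\lambda}(X)$ turns the compactly supported statement into the claim that $\stab_*\colon H_*(\bar{w}^c_{1^j\lambda}(X);\mathbb{Q})\to H_*(\bar{w}^c_{1^{j+1}\lambda}(X);\mathbb{Q})$ is an isomorphism for $*\leq (j+n)d-N_0(j)$, and tracking the constants through the optimisation of the previous paragraph gives $*\leq\tfrac{j+n}{4}-\tfrac12$; it is precisely this duality step that forces rational coefficients, since $\symp^{j+n}X$ is only a rational manifold. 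I expect the main obstacle to be the middle step — controlling, uniformly over every stratum that can contribute within the target range, the competition between the codimension $d\big((j+n)-\ell(\mu)\big)$ of a stratum (which confines it to low degrees) and its number $\vec\mu(1)$ of multiplicity-one points (which governs its stability range). A secondary technical point, in the odd-dimensional case, is that Poincaré--Lefschetz duality introduces the orientation (sign) local system on each stratum and on $\symp^{j+n}X$, so one needs \cref{hstab coloured} with these twisted coefficients and must carry the twists compatibly through the spectral sequence.
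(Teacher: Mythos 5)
Your proposal is essentially a re-derivation of \cref{hstab symcomp}, from which the corollary follows immediately: for a fixed homological degree $i$, once $j$ is large enough that $\tfrac{j+n}{4}-\tfrac12\geq i$, the stabilisation maps are isomorphisms on $H_i$, so the sequence $b_i(j)$ is eventually constant. The paper gives no separate argument for the corollary — it is stated as an immediate consequence of the theorem — so what you have really written out is the paper's proof of the theorem, and your method matches it closely: stratify $\bar{w}^c_{1^j\lambda}(X)$ by multiplicity type so that each stratum $w_\mu(X)$ is a coloured configuration space $\conf_{\vec\mu}(X)$; run the compactly supported cohomology spectral sequence of the stratification (\cref{spectral sequence}); identify the induced map on $E^1$ with the coloured stabilisation maps via Lefschetz duality on the strata (\cref{stratmap}, \cref{lefschetz duality}); feed in \cref{hstab coloured} using the estimate that a stratum of $\symp^{j+n}X$ with $\ell(\mu)$ parts has at least $2\ell(\mu)-(j+n)$ parts equal to $1$ (which is exactly the paper's bound $j+n-\tfrac{2p}{\dim X}$ once you translate from your dimension grading to the paper's codimension grading); close up via the spectral sequence comparison (\cref{ssbound}) and rational Poincar\'e duality on the orbifold $\symp^{j+n}X$.

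One place where you are actually more careful than the paper: you flag that when $\dim X$ is odd the strata $\conf_{\vec\mu}(X)$ (and $\symp^{j+n}X$ itself) are generally non-orientable because a transposition of same-coloured points is orientation-reversing, so duality should be taken with the orientation local system, and \cref{hstab coloured} would need to hold with that twist. The paper's \cref{lefschetz duality} and \cref{spectral sequence} are stated under an $R$-orientability hypothesis and this point is not addressed when they are applied; your remark is a genuine refinement of the argument rather than a deviation from it.
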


The way we will prove \cref{hstab symcomp} is to study a stratification of $\bar{w}_\lambda^c$ into strata that will be disjoint unions of coloured configuration spaces. Homological stability holds for the strata so the key idea will be to transport this to a theorem on the whole space. To do this we will use the spectral sequence for the filtration given by the stratification and a version of Poincar\'{e} duality.

\subsection{Duality for stratified spaces} \label{ssec: duality}

Let $W$ be a space. A \emph{stratification} of $W$ is a finite filtration
\[ W = F_0 \supset F_1 \supset \ldots \supset F_l = \emptyset \]
such that $F_{p+1}$ is closed in $F_p$ and the filtration differences $M[p] :=F_p - F_{p+1}$ are manifolds (that may be empty).

%
%
%
%

Recall that if $X$ is an $R$-orientable topological space, a version of Poincar\'{e} duality says that
	\[ H_*(X ; R) \cong H^{\dim X - *}_c(X ; R). \]
In particular, for a stratified space we have the following.	
\begin{proposition} \label{lefschetz duality} If the strata of a stratified space $W$ are $R$-orientable, then there are isomorphisms 
	\[ H_*(M[p]; R) \cong H^{\dim(M[p]) - *}_{c}(M[p]; R). \]
\end{proposition}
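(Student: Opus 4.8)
The plan is to observe that \cref{lefschetz duality} is essentially the Poincar\'{e} duality isomorphism recalled immediately above it, applied separately to each stratum. By the definition of a stratification, every filtration difference $M[p] = F_p - F_{p+1}$ is a (topological) manifold, and since $F_{p+1}$ is closed in $F_p$ it is an open piece, so we take it — as is standard for the strata of a stratification — to be a manifold without boundary. By hypothesis each $M[p]$ is $R$-orientable. So the first and essentially only step is: on a connected $R$-oriented manifold $M$ of dimension $d$, the recalled form of Poincar\'{e} duality gives $H_*(M;R) \cong H^{d-*}_c(M;R)$, and this is exactly the asserted isomorphism once we know $\dim(M[p]) = d$.

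The one point that deserves care is that $M[p]$ need not be connected, and in general need not even be pure-dimensional: $F_p - F_{p+1}$ can be a disjoint union of manifolds of several dimensions. I would handle this by decomposing $M[p] = \bigsqcup_d M[p]_d$, where $M[p]_d$ is the union of the $d$-dimensional components. Both functors in play commute with such decompositions: $H_*(-;R)$ always turns a disjoint union into a direct sum, and $H^*_c(-;R)$ does too, since any compactly supported cochain is supported on only finitely many components, so $H^*_c(\bigsqcup_\alpha Y_\alpha;R) \cong \bigoplus_\alpha H^*_c(Y_\alpha;R)$. The $R$-orientation of $M[p]$ restricts to each component, so applying the connected case componentwise and reassembling yields the isomorphism of the proposition, with the symbol $\dim(M[p])$ read componentwise (in the applications in \cref{sec: symcomp} the relevant strata are in fact pure-dimensional, so this bookkeeping disappears).

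I do not expect a genuine obstacle: the entire content is contained in classical Poincar\'{e}--Lefschetz duality for $R$-oriented manifolds together with the behaviour of $H_*$ and $H^*_c$ under disjoint unions. The purpose of stating \cref{lefschetz duality} separately is organisational — it isolates the fact that $R$-orientability of the individual strata $M[p]$, rather than of the ambient stratified space $W$ (which will typically be only a rational homology manifold, hence not literally a manifold), is precisely what is needed to dualise the spectral sequence of the stratification later in this section.
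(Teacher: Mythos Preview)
Your proposal is correct and matches the paper's approach: the paper provides no separate proof for this proposition at all, simply stating it with the words ``In particular'' after recalling Poincar\'{e} duality, so the entire content is exactly the observation you make---apply the recalled duality to each stratum $M[p]$, which is an $R$-orientable manifold by hypothesis. Your additional care about disconnected or non-pure-dimensional strata is more than the paper spells out, but entirely in keeping with its intent.
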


\begin{lemma} \label{spectral sequence} If the strata of a stratified space $W$ are $R$-orientable of codimension $p$, then there is a spectral sequence
	\[ E^1_{pq} = H_q(M[p] ; R) \implies H^{\dim{W} - p - q}_c(W ; R). \] 
\end{lemma}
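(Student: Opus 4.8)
The plan is to build the spectral sequence directly from the filtration defining the stratification, using compactly supported cohomology and its good behaviour for open/closed decompositions. Recall the stratification $W = F_0 \supset F_1 \supset \cdots \supset F_l = \emptyset$, with each $F_{p+1}$ closed in $F_p$ and each stratum $M[p] = F_p - F_{p+1}$ an open subset of $F_p$. The basic tool is the long exact sequence in compactly supported cohomology associated to an open subset $U \subset Y$ with closed complement $Z$, namely $\cdots \to H^k_c(U) \to H^k_c(Y) \to H^k_c(Z) \to H^{k+1}_c(U) \to \cdots$, applied with $Y = F_p$, $U = M[p]$, $Z = F_{p+1}$. First I would assemble these long exact sequences into an exact couple: set $D^{k}_{p} = H^k_c(F_p; R)$ and $E^{k}_{p} = H^k_c(M[p]; R)$, with the three maps being the restriction $H^k_c(F_p) \to H^k_c(M[p])$, the "extension by zero" map $H^k_c(M[p]) \to H^{k+1}_c(F_p)$ coming from $M[p]$ open in $F_p$... wait, more carefully, $H^k_c(M[p]) \to H^k_c(F_p)$ since $M[p]$ is open in $F_p$, and the pullback $H^k_c(F_p) \to H^k_c(F_{p+1})$ along the closed inclusion. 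Since $F_0 = W$ and $F_l = \emptyset$ (so $H^*_c(F_l) = 0$), this is a bounded exact couple and hence yields a convergent spectral sequence with $E_1$-page the $H^*_c(M[p]; R)$ and abutment $H^*_c(W; R)$.

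Next I would reindex to obtain the stated form. Since every stratum has codimension $p$ in $W$ by hypothesis, $\dim M[p] = \dim W - p$, and each $M[p]$ is $R$-orientable, so Proposition \ref{lefschetz duality} applies: $H^k_c(M[p]; R) \cong H_{\dim W - p - k}(M[p]; R)$. I would substitute $q = \dim W - p - k$, i.e. $k = \dim W - p - q$, so that the $E_1$-term becomes $E^1_{pq} = H_q(M[p]; R)$ and the abutment in total degree $k$ on $H^k_c(W)$ corresponds to $k = \dim W - p - q$, giving the claimed statement $E^1_{pq} = H_q(M[p]; R) \implies H^{\dim W - p - q}_c(W; R)$. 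One should check that the filtration index $p$ in the spectral sequence matches the stratification index $p$ — this is automatic from how the exact couple was built, since the $p$-th stage involves exactly $F_p$ and $M[p]$.

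The main obstacle is checking convergence and that the filtration on the abutment is the expected (finite) one: because the stratification is finite and $F_l = \emptyset$, the exact couple is bounded, so the spectral sequence converges and each $E_\infty$-page is finite — this is the standard "filtration of a space by closed subsets" spectral sequence, and the only real content is verifying the hypotheses (finiteness, $F_{p+1}$ closed in $F_p$) are exactly what make the compactly supported cohomology long exact sequences splice together. A secondary technical point is the naturality/well-definedness of the "extension by zero" maps $H^*_c(M[p]) \to H^*_c(F_p)$, which requires $M[p]$ to be open in $F_p$; this holds since $F_{p+1}$ is closed in $F_p$. I would remark that the orientability hypothesis is used only at the reindexing step via Proposition \ref{lefschetz duality}; without it one still gets a spectral sequence converging to $H^*_c(W; R)$ with $E_1$-term $H^*_c(M[p]; R)$, and the "codimension $p$" hypothesis is likewise only needed to make the degree shift uniform.
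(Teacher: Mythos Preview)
Your proposal is correct and follows essentially the same approach as the paper: both build the spectral sequence from the long exact sequences in compactly supported cohomology associated to the open/closed decompositions $M[p] \hookrightarrow F_p \hookleftarrow F_{p+1}$, and then reindex using Poincar\'{e} duality (Proposition~\ref{lefschetz duality}) on each stratum. The only cosmetic difference is that you package the construction as an exact couple, whereas the paper phrases it as the spectral sequence of a filtered cochain complex $F_p(C_c^{\dim W - \bullet}) := C_c^{\dim W - \bullet}(W, F_p)$ and cites \cite[Lemma~3.3]{gj04}; the paper also records explicitly that the $d_1$-differential is the connecting homomorphism of the triple $(F_{p-1}, F_p, F_{p+1})$, which is used later for naturality under the stabilisation map.
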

\begin{proof} 
In general, if $B \subset A$ is closed then there is a long exact sequence in compactly supported cohomology associated to the inclusion $B \rightarrow A$ of the form
	\[ \cdots \rightarrow H^*_c( A - B) \rightarrow H^*_c(A) \rightarrow H^*_c(B) \rightarrow H^{*+1}_c(A - B) \rightarrow \cdots. \]
Therefore, associated to the filtration
\[ W = F_0 \supset F_1 \supset \ldots \supset F_l = \emptyset, \]
there is a spectral sequence in compactly supported cohomology of the form
	\[H_c^{\dim W - p - q}(M[p];R) \implies H^{\dim W - p - q}_c(W ; R). \]
Note that this is the same spectral sequence that one would get for singular homology where $H_c^{\dim W - p - q}(M[p];R)$ plays the role of $H^{\dim W - p - q}(F_p, F_{p+1};R)$.

The proof now proceeds as in \cite[Lemma 3.3]{gj04}. In our case, the filtration of the singular compactly supported cochains of $W$ are given by 
	\[ F_p(C^{\dim{W} - \bullet}) := C^{\dim{W} - \bullet}_c(W, F_p). \]
The $E^1$ page of the spectral sequence is 
	\[ E^1_{pq} = H_c^{\dim{W} - p -q}(M[p]) \]
and it converges to
	\[ E^\infty_{pq} = H_c^{\dim W - p - q}(W). \]
The differentials $d$ of the $E^1$ page of the spectral sequence are given by the connecting homomorphism $\del$ in the long exact sequence in compactly supported cohomology for the triple $(F_{p-1}, F_p, F_{p+1})$ in the sense that the following diagram commutes.
\[ \xymatrix{ H_c^{\dim W - p - q}(M[p]) \ar[r]^{\del \;\;\;\;\;} \ar[d]_\cong &	H_c^{\dim W - p - q + 1}(M[p-1]) \ar[d]^{\cong} \\
	H_q(M[p]) \ar[r]^{d} &	H_q(M[p-1]).	} \]
The maps down are given by \cref{lefschetz duality}. Thus we can identify the $E^1$ page as
	\[  E^1_{pq} = H_q(M[p]). \]
\end{proof}

\begin{remark} We allow for the possibility that the filtration differences $F_p - F_{p+1}$ are empty in which case the duality theorem says that $0 \cong 0 $.
\end{remark}

\subsection{A stratification for symmetric complements}

Let $\lambda$ be a partition of $n$. Let $F_p(\overline{w}^c_\lambda (X))$ be the union of subspaces of $\overline{w}^c_\lambda (X)$ of codimension greater than or equal to $p$ of the form $w_{\lambda'}(X)$ for some partition $\lambda'$ of $n$. We have the following lemma.

\begin{lemma} $F_p(\bar{w}^c_\lambda) - F_{p+1}(\bar{w}^c_\lambda)$ is either empty or a disjoint union of spaces of the form $w_{\lambda'}$. 

\end{lemma}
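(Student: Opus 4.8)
The plan is to analyze which spaces $w_{\lambda'}$ occur inside $\bar{w}^c_\lambda$ and how they fit together by dimension, then show that fixing the codimension exactly cuts out a disjoint union of such pieces. First I would recall the basic stratification of $\symp^n X$ itself: the strata are the spaces $w_{\mu}$ as $\mu$ ranges over all partitions of $n$, with $w_\mu$ of codimension $(n - \ell(\mu))\dim X$ where $\ell(\mu)$ is the number of parts of $\mu$, and with $w_\mu$ contained in the closure $\bar{w}_{\mu'}$ precisely when $\mu' \leq \mu$ in the subpartition order. In particular $\symp^n X = \bigsqcup_{\mu \vdash n} w_\mu$ as sets. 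The subspace $\bar{w}^c_\lambda$ is then, as noted in the text, the union of those $w_\mu$ for which $\mu$ is \emph{not} a subpartition of $\lambda$; call such $\mu$ \emph{admissible}. So set-theoretically $\bar{w}^c_\lambda = \bigsqcup_{\mu \text{ admissible}} w_\mu$, a disjoint union of coloured configuration spaces via \cref{colour comp}.

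Next I would unwind the definition of $F_p$. By construction $F_p(\bar{w}^c_\lambda)$ is the union of the closures (within $\bar{w}^c_\lambda$) — or rather, reading the text literally, the union of those subspaces $w_{\lambda'} \subset \bar{w}^c_\lambda$ of codimension $\geq p$; but since $\bar{w}_{\lambda'} \cap \bar{w}^c_\lambda = \bigcup_{\mu \geq \lambda',\ \mu \text{ admissible}} w_\mu$ and higher strata in a closure have strictly larger codimension, $F_p$ is in fact the union of all admissible strata $w_\mu$ with $\mathrm{codim}(w_\mu) \geq p$, i.e. $(n - \ell(\mu))\dim X \geq p$. Granting this, $F_p$ is closed in $F_{p-1}$ because it is a union of strata closed under passing to higher-codimension strata in closures, and the key computation becomes immediate: $F_p - F_{p+1}$ is the union of those admissible $w_\mu$ whose codimension is \emph{exactly} $p$. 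Since codimension only takes values that are multiples of $\dim X$, this is empty unless $p$ is such a multiple, and when nonempty it is the disjoint union $\bigsqcup w_\mu$ over admissible $\mu$ with $\ell(\mu) = n - p/\dim X$ — and these strata are pairwise disjoint and no one lies in the closure of another (equal codimension forbids the strict inequality $\mu' < \mu$), so the union is a topological disjoint union, hence a manifold.

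The main obstacle, and the step deserving the most care, is the claim that $F_p$ as \emph{defined} (union of the $w_{\lambda'}$ of codimension $\geq p$, taken as subspaces of $\bar w^c_\lambda$) coincides with the union of \emph{all} admissible strata of codimension $\geq p$ — in other words, that collecting the named closed pieces already sweeps up every higher stratum, so that the successive differences are genuinely the individual strata and $F_{p+1}$ is closed in $F_p$. This is where one must invoke the closure relation in $\symp^n X$ carefully: one needs that if $w_\mu$ is admissible then every $w_{\mu'}$ with $\mu' \geq \mu$ appearing in $\bar w_\mu$ is again admissible (clear, since a subpartition of $\mu'$ would be a subpartition of $\mu$, and $\mu$ is not a subpartition of $\lambda$), and that the closure of an admissible stratum inside $\bar w^c_\lambda$ is exactly the union of the admissible strata above it. Once that bookkeeping is in place the topology — closedness of each $F_{p+1}$ in $F_p$ and the identification of the differences — follows formally from the corresponding facts for the stratification of $\symp^n X$. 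I would therefore organize the proof as: (1) recall the stratification and closure order on $\symp^n X$; (2) identify $\bar w^c_\lambda$ with the union of admissible strata; (3) show $F_p$ equals the union of admissible strata of codimension $\geq p$, using that admissibility is upward-closed in the subpartition order; (4) conclude $F_p - F_{p+1}$ is the disjoint union of the codimension-exactly-$p$ admissible strata, hence empty or a disjoint union of spaces $w_{\lambda'}$.
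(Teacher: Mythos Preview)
Your proposal is correct and follows essentially the same route as the paper: both identify $M[p]$ as the union of admissible strata $w_{\lambda'}$ (those with $\lambda' \nleq \lambda$) having exactly $n - p/\dim X$ parts, observe this is empty unless $\dim X \mid p$, and note that partitions with the same number of parts cannot be strict subpartitions of one another, so the union is disjoint. You are more thorough than the paper in checking the side conditions (that admissibility is upward-closed under refinement and that $F_{p+1}$ is closed in $F_p$), but the argument is the same.
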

\begin{proof} Let $M[p]$ be our notation for $F_p(\bar{w}^c_\lambda) - F_{p+1}(\bar{w}^c_\lambda)$. $M[p]$ will be non-empty only when $\dim X$ divides $p$. Assume we are now in this case. If $\lambda$ is a partition of $n$ then $M[p]$ is the subspace of $\bar{w}_\lambda^c$ having multiplicities prescribed by some $\lambda'$ such that $\lambda' \nleq \lambda$. Moreover being in $M[p]$ means the number of parts of $\lambda'$ is equal to $n - \frac{p}{\dim X}$. Here, the number of \emph{parts} of a partition is the sum of the entries in $\vec{\lambda}.$

Each partition that gives a multiplicity in $M[p]$ cannot be a subpartition of one of the others since they have the same number of parts. Call the partitions $\lambda_1, \ldots, \lambda_c$.

Thus $M[p]$ breaks up as a disjoint union
	\[ \bigsqcup_{i = 1}^c w_{\lambda_i} .\]
\end{proof}

The $w_{\lambda_i}$ can be thought of as coloured configuration spaces by \cref{colour comp}.

\begin{corollary} \label{strata} Let $X$ be a manifold. $\bar{w}^c_\lambda(X)$ is a stratified space where the strata are disjoint unions of coloured configuration spaces.
\end{corollary}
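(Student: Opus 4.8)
The statement to prove is \cref{strata}, which asserts that $\bar{w}^c_\lambda(X)$ is a stratified space whose strata are disjoint unions of coloured configuration spaces. This follows almost immediately from the preceding results, so the plan is essentially one of assembly rather than genuine new argument.

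\medskip

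The plan is as follows. First I would recall the definition of the filtration $F_p(\bar{w}^c_\lambda(X))$ given just above the previous lemma: $F_p$ is the union of those subspaces $w_{\lambda'}(X)$ of $\bar{w}^c_\lambda(X)$ of codimension at least $p$. I would then check the two formal requirements in the definition of a stratification (the displayed filtration in \cref{ssec: duality}): that the filtration is finite, and that each $F_{p+1}$ is closed in $F_p$ with the differences $F_p - F_{p+1}$ being manifolds. Finiteness is clear because there are only finitely many partitions $\lambda'$ of $n$, hence only finitely many codimensions occurring; closedness of $F_{p+1}$ in $F_p$ follows because passing to the closure of a stratum can only introduce strata of strictly larger codimension, so the ``codimension $\geq p+1$'' locus is closed inside the ``codimension $\geq p$'' locus. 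Then I would invoke the immediately preceding lemma, which identifies $M[p] = F_p - F_{p+1}$ as either empty or a disjoint union $\bigsqcup_{i=1}^c w_{\lambda_i}(X)$; in particular each $M[p]$ is a manifold (a disjoint union of the manifolds $w_{\lambda_i}(X)$, each of which is a manifold since by \cref{colour comp} it is homeomorphic to a coloured configuration space $\conf_{\vec{\lambda_i}}(X)$, an open subspace of a product of symmetric products — or more directly, $w_{\lambda'}(X)$ is visibly a manifold). Finally, applying \cref{colour comp} to each $w_{\lambda_i}$ gives $M[p] \cong \bigsqcup_{i} \conf_{\vec{\lambda_i}}(X)$, so the strata are disjoint unions of coloured configuration spaces, which is exactly the claim.

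\medskip

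Concretely the writeup is just: ``Take the filtration $\bar{w}^c_\lambda(X) = F_0 \supset F_1 \supset \cdots$ defined above. By the previous lemma each difference $M[p] = F_p - F_{p+1}$ is a disjoint union of spaces $w_{\lambda'}$, each of which is a manifold, and $F_{p+1}$ is closed in $F_p$; hence this is a stratification in the sense of \cref{ssec: duality}. By \cref{colour comp}, each $w_{\lambda'} \cong \conf_{\vec{\lambda'}}(X)$, so the strata are disjoint unions of coloured configuration spaces.''

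\medskip

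I do not expect any real obstacle here — the content has been front-loaded into the lemma on the structure of $M[p]$ and into \cref{colour comp}. The only point requiring a word of care is the verification that $F_{p+1}$ is closed in $F_p$ (equivalently, that the filtration really is by closed subsets), and relatedly that the indexing by codimension is the right bookkeeping so that each non-empty $M[p]$ sits in a single codimension; but both of these are handled by the observation in the proof of the previous lemma that $M[p]$ is non-empty only when $\dim X \mid p$ and that all partitions contributing to a fixed $M[p]$ have the same number of parts $n - p/\dim X$, hence none is a subpartition of another and none lies in the closure of another. So the corollary is immediate from the lemma together with \cref{colour comp}.
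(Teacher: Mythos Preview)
Your proposal is correct and follows essentially the same approach as the paper: the corollary is stated there without an explicit proof, deduced directly from the preceding lemma (which identifies each $M[p]$ as a disjoint union of $w_{\lambda'}$'s) together with \cref{colour comp}. Your writeup is, if anything, slightly more careful than the paper in explicitly checking finiteness of the filtration and closedness of $F_{p+1}$ in $F_p$.
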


The spaces $\bar{w}^c_{1^j \lambda}$ and $\bar{w}^c_{1^{j+1} \lambda}$ will not always have the same number of codimension $p$ strata. Howerver, the following proposition shows that $\bar{w}^c_{1^j \lambda}$ and $\bar{w}^c_{1^{j+1} \lambda}$ will have the same number of \emph{low} codimension $p$ strata depending on $n$, $j$ and $X$.

\begin{proposition} \label{strata bound} For any $\lambda$ a partition of $n$, $\bar{w}^c_{1^j \lambda}(X)$ and $\bar{w}^c_{1^{j+1}\lambda}(X)$ have the same number of codimension $p$ components in their strata for $0 \leq p \leq \frac{n + j }{2} \dim X$.
\end{proposition}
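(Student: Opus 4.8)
The plan is to reduce the statement to a purely combinatorial count about partitions, since by the preceding lemma the codimension-$p$ stratum of $\bar w^c_\mu$ is indexed by the set of partitions $\lambda'$ of $|\mu|$ with $\lambda' \nleq \mu$ and with exactly $|\mu| - \frac{p}{\dim X}$ parts. So I would first record that the number of codimension-$p$ components of the stratification of $\bar w^c_\mu(X)$ equals the number of partitions $\lambda'$ of $N := |\mu|$ having exactly $N - \frac{p}{\dim X}$ parts and satisfying $\lambda' \nleq \mu$, and this is nonzero only when $\dim X \mid p$. Writing $p = d\cdot r$ with $d = \dim X$, we want to compare, for $\mu = 1^j\lambda$ (so $N = n+j$) and for $\mu = 1^{j+1}\lambda$ (so $N = n+j+1$), the cardinalities of
\[ A_r(j) := \{ \lambda' \vdash n+j : \lambda' \text{ has } n+j-r \text{ parts}, \ \lambda' \nleq 1^j\lambda \} \]
and $A_r(j+1)$, and to show $|A_r(j)| = |A_r(j+1)|$ whenever $r \le \frac{n+j}{2}$.

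The key step is to build an explicit bijection $A_r(j) \to A_r(j+1)$ by "adding a $1$" — i.e.\ sending $\lambda'$ to the partition $\lambda' \cup \{1\}$ obtained by adjoining one extra part equal to $1$. This map adds one part and adds one to the total, so it sends partitions of $n+j$ with $n+j-r$ parts to partitions of $n+j+1$ with $n+j+1-r$ parts, as required; and one checks it respects the condition $\nleq$: if $\lambda' \nleq 1^j\lambda$ then $\lambda'\cup\{1\} \nleq 1^{j+1}\lambda$, because any way of coarsening $1^{j+1}\lambda$ to $\lambda'\cup\{1\}$ would, after removing a matched singleton part, coarsen $1^j\lambda$ to $\lambda'$. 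The real content is surjectivity in the stated range: given $\lambda'' \vdash n+j+1$ with $n+j+1-r$ parts and $\lambda'' \nleq 1^{j+1}\lambda$, I must show $\lambda''$ actually has a part equal to $1$, so that it is in the image. A partition of $n+j+1$ into $n+j+1-r$ parts, none of which is $1$, has all parts $\ge 2$, hence total $\ge 2(n+j+1-r)$; forcing $2(n+j+1-r) \le n+j+1$ gives $r \ge \frac{n+j+1}{2}$, i.e.\ $r > \frac{n+j}{2}$. So for $r \le \frac{n+j}{2}$ every such $\lambda''$ has a part equal to $1$ and removing it lands in $A_r(j)$; this is the inverse map. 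Translating back through $p = dr$, the bijection holds for $0 \le p \le \frac{n+j}{2}\dim X$.

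The main obstacle is making the "$\nleq$" bookkeeping precise on both sides of the bijection: one must verify carefully that adjoining or deleting a singleton part is compatible with the subpartition relation to $1^j\lambda$ versus $1^{j+1}\lambda$, using that the extra $1$ in $1^{j+1}\lambda$ is interchangeable with the extra $1$ we adjoin to $\lambda'$. I would handle this by a short lemma: for any partition $\nu$, one has $\nu\cup\{1\} \le 1^{j+1}\lambda$ if and only if $\nu \le 1^j\lambda$ — the forward direction by cancelling a singleton used in the coarsening (a coarsening never merges two parts that both stay as singletons, so if $\nu\cup\{1\}$ contains a part $1$ it must come from an unmerged $1$ of $1^{j+1}\lambda$), and the backward direction being immediate. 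Everything else is elementary counting, and the dimension-divisibility condition $\dim X \mid p$ matches automatically on both sides since $|1^j\lambda|$ and $|1^{j+1}\lambda|$ differ by $1$ and parts counts differ correspondingly.
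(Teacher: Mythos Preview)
Your proposal is correct and follows essentially the same approach as the paper: define the map $\lambda' \mapsto 1\lambda'$ on strata, observe it is injective, and prove surjectivity in the stated range via the pigeonhole count that a partition of $n+j+1$ into $n+j+1-r$ parts with $r \le \frac{n+j}{2}$ must contain a part equal to $1$. You are in fact more careful than the paper about checking that the condition $\nleq 1^j\lambda$ (resp.\ $\nleq 1^{j+1}\lambda$) is preserved under adjoining or deleting a $1$, which the paper leaves implicit.
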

\begin{proof} 
Assume first that $\dim X$ divides $p$ for otherwise both $\bar{w}^c_{1^j \lambda}$ and $\bar{w}^c_{1^{j+1}\lambda}$ will have no codimension $p$ strata.

Let $\mathrm{strat}_p(-)$ denote the set of components of $M[p]$. There is a function
	\[ \mathrm{strat}_p(\bar{w}^c_{1^j \lambda}(X)) \rightarrow \mathrm{strat}_p(\bar{w}^c_{1^{j+1}\lambda}(X)) \]
that sends $w_{\lambda'} \mapsto w_{1\lambda'}$. This map is clearly injective for all $i$. It is not surjective when $\mathrm{strat}_p(\overline{w}^c_{1^{j+1}\lambda}(X))$ contains a stratum of the form $w_{\lambda''}$ where $\lambda''$ is a partition without any $1$'s. 

We claim that $w_{\lambda''}$ has at least $n + j + 1 - \frac{2p}{\dim X}$ points of multiplicity 1. Since $w_{\lambda''}$ has codimension $p$ in $\bar{w}^c_{1^{j+1}}$, the maximum number of multiplicity 1 points that could have been removed comes from pairing $\frac{2p}{\dim X}$ points into multiplicity 2 points. Therefore there are at least $n + j + 1 - \frac{2p}{\dim X}$ points of multiplicity 1 left over. This occurs when $\lambda'' = 1^y2^z$, where $y = n+j + 1 - \frac{2p}{\dim X}$ and $z = \frac{p}{\dim X}$, if this is an allowed stratum i.e., if this $\lambda''$ is not a subpartition of $\lambda$. Thus the map is surjective when
	\[ n + j + 1 - \frac{2p}{\dim X} \geq 1, \]
or equivalently when 
	\[ p \leq (n+j)\frac{\dim X}{2}. \]
\end{proof}

\subsection{The stabilisation map}
Let $X$ be a connected open manifold of dimension greater than one that is the interior of a compact manifold with boundary. Then we have stabilisation maps $\conf_n(X) \rightarrow \conf_{n+1}(X)$. Similarly one can define a map $\symp^n (X) \rightarrow \symp^{n+1}(X)$ that induces maps 
	\[ \stab: \bar{w}^c_{1^j \lambda} \rightarrow \bar{w}^c_{1^{j+1} \lambda} \]
which adds a point of multiplicity one. The construction is analogous to our the construction in \cref{sec: stab maps}, replacing the ordered configuration space with the product $X^n$. Note that if a configuration in $\bar{w}^c_{1^j \lambda}$ was of an allowed multiplicity, the its image under the map $\stab$ is an allowed multiplicity in $\bar{w}^c _{1^{j+1} \lambda}$ since the newly added point will have multiplicity 1.

\begin{lemma} \label{stratmap} The map $\stab: \bar{w}^c_{1^j \lambda} \rightarrow \bar{w}^c_{1^{j+1} \lambda}$ on strata is the map
	 \[ M[p](\bar{w}^c_{1^j\lambda}) =: M[p]  \rightarrow M'[p] := M[p](\bar{w}^c_{1^{j+1}\lambda}). \] 
On the components of the strata, it is the stabilisation map on configuration spaces given by
	\[ \conf_{\vec{v}} \rightarrow \conf_{\vec{v} + e_1}. \]
\end{lemma}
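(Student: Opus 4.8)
The plan is to unwind the definition of $\stab$ from \cref{sec: stab maps}, in the symmetric‑product version sketched in the paragraph preceding the lemma: fix a boundary component $\del_0$ of $\bar{X}$ with basepoint $b_0$, pass to $X'\cong X$ obtained by attaching a collar to $\del_0$, and send a configuration to the one obtained by additionally placing a point at $b_0$ inside the collar. The two features that matter are that the inserted point always has multiplicity one and that it is disjoint from the rest of the configuration. So if $z\in\symp^{n+j}X$ has multiplicity partition $\lambda'$ (a partition of $n+j$), then $\stab(z)$ has multiplicity partition $1\lambda'$.

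First I would check that $\stab$ preserves the filtration, so that it induces maps $M[p]\to M'[p]$. This has a combinatorial half and a dimension‑count half. For the combinatorial half: $1\lambda'\leq 1^{j+1}\lambda$ forces $\lambda'\leq 1^j\lambda$. Indeed, in any realisation of $1\lambda'$ as a subpartition of $1^{j+1}\lambda$ — a grouping of the parts of $1^{j+1}\lambda$ into blocks summing to the parts of $1\lambda'$ — every block summing to $1$ is a singleton $\{1\}$, and there is at least one of them (matching the new part ``$1$'' of $1\lambda'$); deleting one such block removes a single part ``$1$'' from each side and exhibits $\lambda'\leq 1^j\lambda$. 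Contrapositively, $\lambda'\nleq 1^j\lambda$ implies $1\lambda'\nleq 1^{j+1}\lambda$, so $\stab$ does carry $\bar{w}^c_{1^j\lambda}$ into $\bar{w}^c_{1^{j+1}\lambda}$ and sends the stratum $w_{\lambda'}$ into $w_{1\lambda'}$. For the dimension count: if $\lambda'$ has $\ell$ parts then $w_{\lambda'}\cong\conf_{\vec{\lambda'}}(X)$ has dimension $\ell\dim X$, hence codimension $(n+j-\ell)\dim X$ in $\symp^{n+j}X$; since $1\lambda'$ has $\ell+1$ parts and $\symp^{n+j+1}X$ has dimension $(n+j+1)\dim X$, the stratum $w_{1\lambda'}$ has the same codimension $(n+j-\ell)\dim X$. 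Since $\bar{w}^c_{1^j\lambda}$ and $\bar{w}^c_{1^{j+1}\lambda}$ are open in their ambient symmetric products, codimensions of strata are computed there, so $\stab$ takes codimension‑$p$ strata to codimension‑$p$ strata; that is $F_p\to F_p'$ and $F_{p+1}\to F_{p+1}'$, inducing the desired $M[p]\to M'[p]$.

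Next I would identify the induced map $w_{\lambda'}\to w_{1\lambda'}$ on a component of a stratum with the colour‑$1$ stabilisation map. Under the homeomorphisms of \cref{colour comp}, in which colour $i$ records multiplicity‑$i$ points, ``add a multiplicity‑one point near $\del_0$'' translates to ``add a point of the first colour near $\del_0$''. Both the symmetric‑product stabilisation $\stab$ and the coloured‑configuration stabilisation $\stab_1$ of \cref{sec: stab maps} are built from the identical collar‑and‑insertion recipe — the only difference being that $\stab$ works in the quotient $\symp^{n+j}X$ of $X^{n+j}$ whereas $\stab_1$ works in the quotient $\conf_{\vec{\lambda'}}(X)$ of $\pconf_{n+j}(X)$, which does not affect the inserted point. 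Hence the square
\[ \xymatrix{ \conf_{\vec{\lambda'}}(X) \ar[r]^{\stab_1} \ar[d]_{\cong} & \conf_{\vec{\lambda'} + e_1}(X) \ar[d]^{\cong} \\ w_{\lambda'}(X) \ar[r]^{\stab} & w_{1\lambda'}(X) } \]
commutes, with vertical maps the homeomorphisms of \cref{colour comp}; this gives the stated description on components of the strata.

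I do not expect a genuine obstacle here — the content is that $\stab$ was designed precisely so as to add a multiplicity‑one point disjointly from everything else — so the ``hard part'' is only bookkeeping: checking that the subpartition implication above is stated correctly, and that the collar construction used for $\symp^n X$ really matches, under \cref{colour comp}, the one used for $\conf_{\vec{v}}(X)$, including the indexing of which coordinate the new first‑colour point is inserted in front of.
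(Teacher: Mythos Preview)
Your proposal is correct and follows the same approach as the paper, only in considerably more detail: the paper's proof simply refers back to the proof of \cref{strata bound} for the fact that $\stab$ sends codimension-$p$ strata to codimension-$p$ strata (which you verify directly via the part-count/codimension computation), and then observes that under the identification of \cref{colour comp} adding a multiplicity-one point is the same as adding one to the first entry of $\vec{v}$. Your extra bookkeeping, in particular the subpartition argument $1\lambda'\leq 1^{j+1}\lambda\Rightarrow\lambda'\leq 1^j\lambda$, is not spelled out in the paper but is exactly what underlies its one-line appeal to the previous proposition.
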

\begin{proof} From the proof of \cref{strata bound}, it is clear that the stabilisation map sends $M[p]$ to $M'[p]$. Identifying $\conf_{\vec{v}}(X)$ as the subspace of $\symp^n(X)$ where $\vec{v}(i)$ corresponds to the number multiplicity $i$ elements, we see that adding a multiplicity one point is the same as adding one to the first entry of $\vec{v}$.
\end{proof}

\subsection{Homological stability for symmetric complements}

The last ingredient we will need for the proof of \cref{hstab symcomp} is the following lemma on spectral sequences.
\begin{lemma} \label{ssbound} Consider two first quadrant spectral sequences
	\[ E_{pq}^{1} \Rightarrow E_{pq}^{\infty} \]
	\[ 'E_{pq}^1 \Rightarrow {'E}_{pq}^\infty. \]
Let $f: E_{pq} \rightarrow {'E}_{pq}$ be a map of homological spectral sequences. If $f$ is an isomorphism on $E^1_{pq}$ for $p + q \leq 2* + 1$ then the induced map on $E^\infty$
	\[ f_\infty : E_{pq}^\infty \rightarrow {'E}_{pq}^\infty \]
is an isomorphism for $p + q \leq *$.
\end{lemma}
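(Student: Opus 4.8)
The plan is to run the standard comparison argument for convergent first-quadrant spectral sequences, pushing the isomorphism from the $E^1$-page through all the pages to $E^\infty$, but being careful to track exactly how the degree bound $p+q \le 2*+1$ degrades as we pass to later pages. First I would recall that on the $r$-th page the differential $d^r$ has bidegree $(-r, r-1)$, so it decreases total degree by one; consequently the subquotient $E^{r+1}_{pq}$ of $E^r_{pq}$ is computed as the homology of $E^r_{pq}$ with incoming differential from $E^r_{p+r,\,q-r+1}$ (total degree $p+q+1$) and outgoing differential to $E^r_{p-r,\,q+r-1}$ (total degree $p+q-1$). Since $f$ is a map of spectral sequences, it commutes with all the $d^r$, so the five-lemma applied to the relevant exact sequences lets us conclude that $f_r: E^r_{pq} \to {'E}^r_{pq}$ is an isomorphism at a bidegree $(p,q)$ provided $f_{r-1}$ is an isomorphism in total degrees $p+q-1$, $p+q$, and $p+q+1$.

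The key bookkeeping step is then an induction on $r$: I claim $f_r$ is an isomorphism on $E^r_{pq}$ for $p+q \le 2*+1$ for all $r \ge 1$ — in fact the hypothesis is already strong enough that no degradation occurs, because to get an isomorphism in total degree $d$ on page $r$ we only need isomorphisms in total degrees $d-1, d, d+1$ on page $r-1$, and if $d \le 2*$ then $d+1 \le 2*+1$, so all three lie in the range where the inductive hypothesis applies. Thus $f_r$ is an isomorphism for $p+q \le 2*$ on every page (one loses exactly the single top degree $2*+1$ after the first page, which is harmless). Since the spectral sequences are first-quadrant, for each fixed $(p,q)$ we have $E^r_{pq} = E^\infty_{pq}$ and ${'E}^r_{pq} = {'E}^\infty_{pq}$ for $r$ sufficiently large (all differentials in and out vanish once $r > \max(p,q)+1$), so passing to the limit gives that $f_\infty$ is an isomorphism for $p+q \le 2*$, which in particular covers $p+q \le *$.

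The main obstacle — really the only subtlety — is the degree accounting: one must be sure that the $+1$ in the hypothesis $p+q \le 2*+1$ is spent correctly, namely that it is exactly what is needed to feed the five-lemma at the next page without eroding the target range $p+q\le *$. A convenient way to organize this cleanly is to prove by induction on $r\ge 1$ the statement ``$f_r$ is an isomorphism on $E^r_{pq}$ whenever $p+q \le 2*+1-(r-1)\cdot 0$'', i.e. simply ``$p+q\le 2*$ for $r\ge 2$ and $p+q\le 2*+1$ for $r=1$'', and to observe that the three total degrees $d-1,d,d+1$ entering the five-lemma at step $r$ all satisfy $\le 2*+1$ when $d\le 2*$, so the inductive step goes through. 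Everything else — the construction of the long exact sequences from $\ker d^r$ and $\mathrm{coker}\, d^r$, the naturality of these sequences under $f$, and the eventual stabilization of first-quadrant spectral sequences — is routine homological algebra which I would cite or state without proof.
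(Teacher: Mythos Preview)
Your induction has a gap. You claim that for every $r \ge 2$ the map $f_r$ is an isomorphism on $E^r_{pq}$ whenever $p+q \le 2*$, and you justify the inductive step by noting that the three total degrees $d-1,\,d,\,d+1$ needed all satisfy $\le 2*+1$. But for $r \ge 2$ your inductive hypothesis only gives an isomorphism in total degree $\le 2*$, not $\le 2*+1$; the extra degree $2*+1$ was available on page $1$ only. So as written the passage from page $r$ to page $r+1$ fails for $r \ge 2$ at the top of the range: the incoming differential into total degree $2*$ originates in total degree $2*+1$, where you no longer know $f_r$ is an isomorphism.

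The gap is repairable. Strengthen the inductive hypothesis to: $f_r$ is an \emph{isomorphism} for $p+q \le 2*$ and a \emph{surjection} for $p+q = 2*+1$. Surjectivity on the source of the incoming $d^r$, together with the isomorphism on its target, forces $f$ to carry $\mathrm{im}\,d^r$ isomorphically onto $\mathrm{im}\,{'d}^r$, which is exactly what the quotient needs; and surjectivity in degree $2*+1$ persists to the next page because the outgoing $d^r$ there lands in degree $2*$, where $f_r$ is injective. With this amendment your argument works and in fact yields the stronger conclusion that $f_\infty$ is an isomorphism for all $p+q \le 2*$.

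This is a genuinely different route from the paper's. The paper lets the isomorphism range degrade by one total degree per page --- page $r$ gives an isomorphism for $p+q \le 2*+2-r$ --- and then invokes the first-quadrant stabilisation $E^\infty_{pq} = E^{\max(p,\,q+1)+1}_{pq}$ to cap the number of relevant pages at roughly $*+2$; the factor of two in the hypothesis is spent absorbing that degradation. Your approach, once patched with the surjectivity clause, never spends that factor and recovers a sharper bound, at the cost of a slightly more delicate induction tracking two conditions rather than one.
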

\begin{proof} In a first quadrant spectral sequence, 
	\[E^\infty_{pq} = E^{\max(p, q+1)+1}_{pq}.\]
This is because all the nonzero differentials going into the triangle $p + q \leq *$ come from the triangle $p + q \leq * + 1$. If all these differentials and modules are the same in both spectral sequences then $f$ will be an isomorphism on $E^\infty$ for $p + q \leq *$. 

Now suppose $f: E^1_{pq} \rightarrow {'E}^1_{pq}$ was an isomorphism for $p + q \leq 2* + 1$. Then the map on $E^2$ pages is an isomorphism for $p+q \leq 2*$ since all morphisms, images, kernels and maps are the same for both spectral sequences in the relevant region where differentials can come from and go to. Working our way through the pages of the spectral sequence we see that $E^{*+1}_{pq}$ and ${'E}^{*+1}_{pq}$ are isomorphic in the range $p+q \leq *+1$. Thus the $E^\infty$ pages are isomorphic for $p+q \leq *$.
\end{proof}

\begin{proof}[Proof of \cref{hstab symcomp}]
Consider 
	\[ \stab: \bar{w}^c_{1^j \lambda} \rightarrow \bar{w}^c_{1^{j+1}\lambda}. \]

We use the stratifications for $\bar{w}^c_{1^j \lambda}$ and $\bar{w}^c_{1^{j+1}\lambda}$ as in \cref{strata}. The filtration for $\bar{w}^c_{1^{j+1} \lambda}$ will be denoted by $F_p'$.

By \cref{spectral sequence}, there are spectral sequences
	\[ E^1_{pq} = H_q(M[p]) \Rightarrow H^{\dim(\bar{w}^c_{1^j \lambda}) - p - q}_c(\bar{w}^c_{1^j \lambda} )\]
and
	\[ 'E^1_{pq} = H_q(M'[p]) \Rightarrow H^{\dim(\bar{w}^c_{1^{j+1}\lambda})  - p - q}_c(\bar{w}^c_{1^{j+1} \lambda}) \]

Recall from the proof of \cref{spectral sequence} that the differentials of the spectral sequence come from the connecting map of the long exact sequence in cohomology for the triple $(F_{p-1}, F_{p}, F_{p+1})$. Naturality of the connecting map in the long exact sequence implies that the map stab induces a morphism of spectral sequences. On each component, the map 
	\[ M[p]  \rightarrow M'[p] \]
is the stabilisation map on coloured configuration spaces by \cref{stratmap}. We assume now that $\dim X$ divides $p$ since we will be interested in when this map is an isomorphism. 


As in the proof of \cref{strata bound}, the strata $M[p]$ break up into a disjoint union of coloured configuration spaces where there are at least $j + n - \frac{2p}{\dim X}$ points of multiplicity $1$ in each component. When $j + n - \frac{2p}{\dim X} \geq 0$ the stabilisation map $M[p] \rightarrow M'[p]$ is surjective on the set of components by \cref{strata bound}.

Using \cref{hstab coloured}, we see that the map
	\[ \stab_* : E^1_{pq} \rightarrow (E^1_{pq})' \]
is an isomorphism for $q + \frac{p}{\dim X} \leq (j+n)/2$ which certainly implies that $\stab_*$ is an isomorphism for
	\[ q + p \leq \frac{j+n}{2}. \]
By \cref{ssbound} this means that the map
	\[ E^\infty_{pq} \rightarrow (E^\infty_{pq})' \]
is an isomorphism for $p + q \leq \frac{j+n}{4} - \frac{1}{2}$.

Lastly observe that the spaces $\bar{w}_{\lambda}^c$ satisfy rational Poincar\'{e} duality since they are open subspaces of the orbifold $\symp^n(X)$.  Therefore the map 
	\[\stab: H_*(\bar{w}_{1^j\lambda}^c; \mathbb{Q}) \rightarrow H_*(\bar{w}_{1^{j+1}\lambda}^c, \mathbb{Q})\] is an isomorphism for $* \leq \frac{j+n}{4} - \frac{1}{2}$. 
 \end{proof}

\bibliographystyle{alpha}
\bibliography{references}

\begin{thebibliography}{VMW12}

\bibitem[Arn69]{arnold69}
V.~I. Arnol'd.
\newblock The cohomology ring of the group of dyed braids.
\newblock {\em Mat. Zametki}, 5:227--231, 1969.

\bibitem[Chu12]{church12}
Thomas Church.
\newblock Homological stability for configuration spaces of manifolds.
\newblock {\em Invent. Math.}, 188(2):465--504, 2012.

\bibitem[Dol62]{Dold62}
Albrecht Dold.
\newblock Decomposition theorems for s(n)-complexes.
\newblock {\em Annals of Mathematics}, 75(1):pp. 8--16, 1962.

\bibitem[GJ94]{gj04}
Ezra Getzler and John J.~D. Jones.
\newblock Operads, homotopy algebra and iterated integrals for double loop
  spaces.
\newblock 1994.
\newblock preprint: arXiv:hep-th/9403055.

\bibitem[KM13a]{km13b}
Alexander Kupers and Jerremy Miller.
\newblock Homological stability for complements of closures.
\newblock 2013.
\newblock Private communication.

\bibitem[KM13b]{km13}
Alexander Kupers and Jerremy Miller.
\newblock Homological stability for topological chiral homology and
  completions.
\newblock 2013.
\newblock preprint: arXiv:1311.5203.

\bibitem[McD75]{mcduff75}
Dusa McDuff.
\newblock Configuration spaces of positive and negative particles.
\newblock {\em Topology}, 14:91--107, 1975.

\bibitem[Pal11]{palmer11}
Martin Palmer.
\newblock Homological stability for oriented configuration spaces.
\newblock 2011.
\newblock preprint: arXiv:1106.4540.

\bibitem[RW13]{orw13}
Oscar Randal-Williams.
\newblock Homological stability for unordered configuration spaces.
\newblock {\em Q. J. Math.}, 64(1):303--326, 2013.

\bibitem[Seg73]{segal73}
Graeme Segal.
\newblock Configuration-spaces and iterated loop-spaces.
\newblock {\em Invent. Math.}, 21:213--221, 1973.

\bibitem[Seg79]{segal79}
Graeme Segal.
\newblock The topology of spaces of rational functions.
\newblock {\em Acta Math.}, 143(1-2):39--72, 1979.

\bibitem[VMW12]{vw12}
Ravi Vakil and Melanie Matchett~Wood.
\newblock Discriminants in the grothendieck ring.
\newblock 2012.
\newblock preprint: arXiv:1208.3166.

\end{thebibliography}

\end{document}